\documentclass{article}
\usepackage{graphicx}

\usepackage{amsmath}
\usepackage{amssymb}
\usepackage{comment}
\usepackage[noadjust]{cite}
\usepackage{enumerate}
\usepackage{mathrsfs}
\usepackage{mathtools}
\usepackage{subcaption}
\usepackage{siunitx}
\usepackage{proof}
\usepackage{qtree}
\usepackage{url}
\usepackage{xcolor}
\definecolor{1f1e33}{HTML}{1F1E33}
\definecolor{mediumblue}{HTML}{0000CD}

\usepackage{tikz}
\usetikzlibrary{automata, arrows.meta, positioning}

\usepackage[all]{xy}

\usepackage{amsthm}
\theoremstyle{definition}
\newtheorem{dfn}{Definition}[section]
\newtheorem{prop}[dfn]{Proposition}
\newtheorem{lem}[dfn]{Lemma}
\newtheorem{thm}[dfn]{Theorem}
\newtheorem{cor}[dfn]{Corollary}
\newtheorem{rem}[dfn]{Remark}

\newtheorem{ex}[dfn]{Example}

\newcommand{\Prop}{\mathbf{Prop}}
\newcommand{\Nom}{\mathbf{Nom}}
\newcommand{\dia}{\diamondsuit}
\newcommand{\Kmodel}{\mathfrak{M}}
\newcommand{\Kframe}{\mathfrak{F}}

\usepackage{listings}
\lstset{
    basicstyle={\small\ttfamily},
    columns={fullflexible},
    frame=tlrb,
    rulecolor=\color{black},
    backgroundcolor=\color{black!10}
}

\usepackage[colorlinks=true,allcolors=1f1e33]{hyperref}
\usepackage{nameref}

\title{Completeness of Tableau Calculi for Two-Dimensional Hybrid Logics}

\author{Yuki Nishimura}
\date{}
\begin{document}
\maketitle

\begin{abstract}
Hybrid logic is one of the extensions of modal logic. The many-dimensional product of hybrid logic is called hybrid product logic (HPL). We construct a sound and complete tableau calculus for two-dimensional HPL. Also, we made a tableau calculus for hybrid dependent product logic (HdPL), where one dimension depends on the other. In addition, we add a special rule to the tableau calculus for HdPL and show that it is still sound and complete. All of them lack termination, however.
\end{abstract}

\section{Introduction}
\label{secintro}

Hybrid logic is an extension of basic modal logic with additional propositional symbols called nominals and an operator called a satisfaction operator $@$. A nominal is a formula that is true only in a single possible world in a Kripke model. Simply put, we can assume that it is a syntactic name of a possible world. Also, with the satisfaction operator, we can create a formula of the form $@_i p$, which means ``the proposition $p$ holds at point $i$.'' Hybrid logic was first invented by Prior \cite{prior1967, prior1968} and has evolved in various ways since then (see \cite{tencate2005, blackburn2006P, indrzejczak2007, brauner2011}).

For example, consider the following sentence: ``Alice publishes a book on September 1.'' Using hybrid language, we can write it as $@_i p$, where $p =$ ``Alice will publish a book'' and $i =$ ``It is September 1.'' In this way, we can describe one particular point, such as a moment, a place, or a person.

Needless to say, $\square$ and $\dia$ are familiar from ordinary languages of modal logic (e.g., \cite{blackburn2002}). On the other hand, nominals can be said to incorporate the idea of ``individual constants'' in predicate logic. In this respect, hybrid logic is a ``hybrid'' of modal logic and predicate logic. Furthermore, we can also reinterpret the word ``hybrid'' from a different perspective. In modal logic, $\square \varphi$ and $\dia \varphi$ are local descriptions from the viewpoint of where we are now \cite[Preface]{blackburn2002}. On the other hand, satisfaction operators, which point a single world, enable us to write a global description from a bird's-eye view of the model. In this respect, hybrid logic is a hybrid of local and global descriptions \cite{blackburn1995, blackburn2006A}.

The advantage of hybrid logic is that, when considering modal logic as a ``logic of relational structures'' \cite[Preface]{blackburn2002}, hybrid logic is more expressive than modal logic in terms of definability of relational structures, i.e., frame definability. For example, in basic modal logic, there is no axiom corresponding to the irreflexivity of the model \cite{goldblatt1987}. In hybrid logic, however, we have a simple axiom, $@_i \neg \dia i$, where $i$ is a nominal \cite{blackburn1999}. 

The expansion of hybrid logic to multiple dimensions is called many-dimensional hybrid logic. To illustrate what it is, let us examine one example of two-dimesional hybrid logic, borrowing an idea from \cite{sano2010}. Consider the following scenario:

\begin{quotation}
  It is 10 o'clock, and you are writing a paper in your room on the 1st floor. Then, you get an e-mail: we have an urgent matter to discuss, so please meet in room 1001 on the 10th floor at 12 o'clock. You think 12 o'clock is in the future, and room 1001 is above your room. Thus, the emergency meeting will be held upstairs in the future.
\end{quotation}

This deduction can be formalized as follows:
\[
  @_i @_a p \land \langle \textsf{Future} \rangle i \land \langle \textsf{Up} \rangle a \rightarrow \langle \textsf{Future} \rangle \langle \textsf{Up} \rangle p ,
\]
where $p =$ ``A meeting is held,'' $i =$ ``It is 12 o'clock,'' and $a =$ ``It is on 10th floor.'' In this example, we regard one dimension as time and the other as place. The propositions ``It is 12 o'clock'' and ``It is on the 10th floor'' are treated as nominals since they point to one moment and one area, respectively.

The formula $@_i @_a p$ in the example above, combining two kinds of satisfaction operators, describes some properties of a unique world. In this point of view, many-dimensional hybrid logics inherit the globality of orthodox (i.e., one-dimensional) hybrid logic.

However, note that there is a case in which we cannot keep the globality of hybrid logic. In the example above, nominal $i$ points to the worlds that are at 12 o'clock, so $@_i p$ holds only at 12 o'clock. In this case, it is a global expression about the dimension of time. However, this proposition says nothing about where you are. So the proposition $@_i p$ holds on the 10th floor, but it does not hold in your room on the 1st floor, which loses the globality.

Another interesting example of two-dimesional hybrid logic is Facebook logic \cite{seligman2011}, which multiplies two concepts, personal friendships and knowledge, and describes the influence of each other. For instance, consider this sentence: ``I am a friend of Andy, and Andy knows that Alice published a book. Then, one of my friends knows that Alice had published a book.'' This inference can be written using the language of Facebook logic, as follows:

\[
  \langle \textsf{Friend} \rangle i \land @_i [\textsf{Know}] p \rightarrow \langle \textsf{Friend} \rangle [\textsf{Know}] p ,
\]
where $p =$ ``Alice published a book'' and $i =$ ``This is Andy.'' 

Facebook logic makes the most of the strong points of hybrid logic. Friendship has the property that no one is a friend of themself, and the corresponding property of the binary relation is irreflexivity. As mentioned earlier, this property can be expressed not in the modal language but in the hybrid language. Furthermore, mapping a nominal of the hybrid language to an agent makes it possible to express friendships of specific agents. For example, the formula $@_i \langle \textsf{Friend} \rangle j$ has the meaning that ``$i$ has a friend satisfying $j$.'' Since the proposition $j$ holds for only one agent, i.e., $j$ is the agent's name, we can interpret this formula as meaning that ``$i$ is a friend of $j$.''

In particular, when all the dimensions are independent of each other, it is called hybrid product logic (HPL). For two-dimensional HPL, a Hilbert-style axiomatization was made in \cite{sano2010}. Then, the sequent calculus of HPL was proposed in \cite{sano2015}. However, there is no tableau calculus for many-dimensional hybrid logic. 

Tableau calculi for orthodox hybrid logic have been studied extensively in \cite{bolander2007} (see also \cite{bolander2009, indrzejczak2013, blackburn2017}). In the case of two-dimensional modal logic, tableau calculi typically use labeled formulae of the form $(x, y): \varphi$ instead of ordinary formulae. However, the internalized tableau calculus we propose here treats the labels as part of the hybrid formulae (we write $@_i @_a \varphi$ as the meaning of $(i, a): \varphi$), allowing us to create tableaux in which only non-labeled formulae appear. As this paper will show, this seems to be a natural way to explore HPL, hybrid dependent product logic (HdPL), and their extensions.

In this paper, we have constructed a sound and complete tableau calculus for two-dimensional HPL and also for HdPL, where one dimension depends on the other, based on the method of \cite{bolander2007}. It lacks termination, however, so the decidability of HPL is still an open problem.  

One of the main differences to \cite{sano2015} is how to treat completeness and cut-elimination. In \cite{sano2015}, the completeness of the sequent calculus was indirectly verified by showing that it is equivalent to the Hilbert system. Moreover, in this paper, the cut-elimination theorem is shown without proof, perhaps because the proof is too complex to write briefly. In contrast, we introduce the cut-free tableau calculi, and their completeness is directly demonstrated by constructing a model that satisfies the root formula from an arbitrary tableau. These properties are helpful since if a formula $\varphi$ is not provable, then we can make a counterexample model of $\varphi$. 

Besides, this tableau calculus and Sano's sequent calculus differ for some points. For example, we directly create the proof system of HPL. This is different from \cite{sano2015} since it constructs two-dimensional hybrid logic whose dimensions depend on each other, and it treats HPL as an application. Another more technical difference is that our rules (specifically $[\mathit{Id}_n]$ and $[\mathit{Id'}_n]$) only allow us to change the ``label'' of nominals, though Equality Rules in \cite{sano2015} let us replace nominals in an arbitrary occurrence with equivalent ones.

We proceed as follows: Section \ref{secsemantics} introduces the semantics of HPL according to \cite{sano2010}, and Section \ref{sectableau} introduces the tableau calculus of HPL. Then, we show soundness and completeness in Sections \ref{secsound} and \ref{seccomplete}, respectively. Furthermore, in Section \ref{secHdPL}, we introduce the semantics and the tableau calculus of HdPL, a logic with dependencies on dimensions, and show that soundness and completeness hold. In particular, in Section \ref{subsecinc}, we introduce the property ``decreasing'' of the HdPL model and show that it can be successfully incorporated into the tableau calculus of HdPL.

\section{Kripke Semantics of Hybrid Product Logic}
\label{secsemantics}
\begin{dfn}
  The vocabulary of HPL is the following:
  \begin{enumerate}[i)]
    \item a countable set $\Prop = \{ p, q, r, \dots \}$ of propositional variables,
    \item two disjoint countable sets $\Nom_1 = \{ i, j, k, \dots \}$ and $\Nom_2 = \{ a, b, c, \dots \}$ of nominals, which are disjoint from $\Prop$,
    \item two boolean connectives: $\neg$ and $\land$,
    \item two modal operators: $\diamondsuit_1$ and $\diamondsuit_2$,
    \item two kinds of satisfaction operators: $@_i (i \in \Nom_1)$ and $@_a (a \in \Nom_2)$.
  \end{enumerate}
  We denote the set of all nominals by $\Nom$, i.e., $\Nom = \Nom_1 \cup \Nom_2$.
\end{dfn}

\begin{dfn}
  The set of formulae of HPL $\mathcal{L}_{\textbf{HPL}}$ is defined inductively as follows:
  \begin{align*}
    \varphi \Coloneqq p \mid i \mid a \mid \neg \varphi \mid \varphi \land \varphi \mid \diamondsuit_1 \varphi \mid \diamondsuit_2 \varphi \mid @_i \varphi \mid @_a \varphi,
  \end{align*}
  where $p \in \Prop, i \in \Nom_1,$ and $a \in \Nom_2$.

  We define the following abbreviations:
  \begin{align*}
    \varphi \lor \psi \coloneqq \neg(\neg \varphi \land \neg \psi), \ \varphi \rightarrow \psi \coloneqq \neg(\varphi \land \neg \psi), \ \square_k \varphi \coloneqq \neg \diamondsuit_k \neg \varphi \ (k = 1, 2).
  \end{align*}
  Also, we use the following abbreviations for a finite set of formulae $\Gamma = \{ \gamma_1, \ldots, \gamma_n \}$:
  \begin{align*}
    \bigwedge \Gamma &\coloneqq \gamma_1 \land \cdots \land \gamma_n, \\
    \bigvee \Gamma &\coloneqq \gamma_1 \lor \cdots \lor \gamma_n. 
  \end{align*}
\end{dfn}

Now, we are ready to provide semantics. Before showing the definition of HPL models, let us review the Kripke frame of basic (one-dimensional) modal logic.

\begin{dfn}
  A \emph{Kripke frame} $\mathfrak{F} = (W, R)$ is defined as follows:
  \begin{itemize}
    \item $W$ is a non-empty countable set.
    \item $R$ is a binary relation on $W$. That is, $R \subseteq W \times W$.
  \end{itemize}
  We call an element of $W$ a \emph{possible world}, and $R$ the \emph{accessibility relation}.
\end{dfn}

For any binary relation $X$ on a set, we write $x X y$ to mean $(x,y) \in X$.

Now we introduce the product Kripke model, the semantics of HPL.

\begin{dfn}
  Given two Kripke frames $\mathfrak{F}_1 = (W_1, R_1)$ and $\mathfrak{F}_2 = (W_2, R_2)$, the \emph{product Kripke frame (product frame)} $\mathfrak{F} = (W, R_h, R_v)$ is defined as follows:
  \begin{itemize}
    \item $W = W_1 \times W_2$.
    \item $(x, y) R_h (x', y')$ iff $x R_1 x'$ and $y = y'$.
    \item $(x, y) R_v (x', y')$ iff $x = x'$ and $y R_2 y'$.
  \end{itemize}

  Furthermore, we define the \emph{product Kripke model (product model)} $\mathfrak{M} = (\mathfrak{F}, V)$ as follows:
  \begin{itemize}
    \item $\mathfrak{F}$ is a product frame.
    \item $V: \Prop \cup \Nom \rightarrow \mathcal{P}(W)$, where
    \begin{itemize}
      \item If $i \in \Nom_1$, then $V(i) = \{ x \} \times W_2 \ \text{for some} \ x \in W_1$.
      \item If $a \in \Nom_2$, then $V(a) = W_1 \times \{ y \} \ \text{for some} \ y \in W_2$.
    \end{itemize}
  \end{itemize}

  We call $V$ the \emph{valuation function}.
  \label{defmodel}
\end{dfn}

The subscripts $h$ and $v$ of $R_h$ and $R_v$ indicate ``horizontal'' and ``vertical.'' Figure \ref{figframe} is an example of a product frame. We often denote a product frame as if we write Cartesian coordinates. 

\begin{figure}[tb]
  \centering
  \begin{tikzpicture}
    \draw[->,>=stealth](-0.5,0)--(2.5,0)node[below]{$W_1$};
    \draw[->,>=stealth](0,-0.5)--(0,2.5)node[left]{$W_2$};
    \draw[very thick] (1.2,0)--(1.2,2.5);
    \draw (1.20,1)node[right]{$\{ i^V \} \times W_2$};
    \draw (1.2,-0.05)node[below]{$i^V$};
    \draw[->,>=stealth](3.5,0)--(6.5,0)node[below]{$W_1$};
    \draw[->,>=stealth](4,-0.5)--(4,2.5)node[left]{$W_2$};
    \draw[very thick] (4,1.2)--(6.5,1.2);
    \draw (5.2,1.20)node[above]{$W_1 \times \{ a^V \}$};
    \draw (3.95,1.2)node[left]{$a^V$};
    \draw[->,>=stealth](7.5,0)--(10.5,0)node[below]{$W_1$};
    \draw[->,>=stealth](8,-0.5)--(8,2.5)node[left]{$W_2$};
    \filldraw[fill=lightgray,thick](9.3,1.3) circle [x radius=0.6, y radius=0.6, rotate=0];
    \draw (9.3,1.3)node[]{$V(p)$};
  \end{tikzpicture}
  \caption{A product Kripke model.}
  \label{figframe}
\end{figure}

We abbreviate $x \in W_1$ such that $V(i) = \{ x \} \times W_2$ by $i^V$. We do the same for $a^V$.

\begin{dfn}
  Given a product model $\mathfrak{M}$, a pair of possible worlds $(x, y)$ in $\mathfrak{M}$, and a formula $\varphi$, the \emph{satisfaction relation} $\mathfrak{M}, (x, y) \models \varphi$ is defined inductively as follows:
  \begin{align*}
    \mathfrak{M}, (x, y) \models p \quad &\text{iff} \quad (x, y) \in V(p), \text{where} \ p \in \Prop, \\
    \mathfrak{M}, (x, y) \models i \quad &\text{iff} \quad x = i^V, \text{where} \ i \in \Nom_1, \\
    \mathfrak{M}, (x, y) \models a \quad &\text{iff} \quad y = a^V, \text{where} \ a \in \Nom_2, \\
    \mathfrak{M}, (x, y) \models \neg \varphi \quad &\text{iff} \quad \text{not} \ \mathfrak{M}, (x, y) \models \varphi, \\
    \mathfrak{M}, (x, y) \models \varphi \land \psi \quad &\text{iff} \quad \mathfrak{M}, (x, y) \models \varphi \ \text{and} \ \mathfrak{M}, (x, y) \models \psi, \\
    \mathfrak{M}, (x, y) \models \diamondsuit_1 \varphi \quad &\text{iff} \quad \text{there is some} \ (x', y') \ \text{s.t.} \ (x, y)R_h(x', y') \ \text{and} \ \mathfrak{M}, (x', y') \models \varphi, \\
    \mathfrak{M}, (x, y) \models \diamondsuit_2 \varphi \quad &\text{iff} \quad \text{there is some} \ (x', y') \ \text{s.t.} \ (x, y)R_v(x', y') \ \text{and} \ \mathfrak{M}, (x', y') \models \varphi, \\
    \mathfrak{M}, (x, y) \models @_i \varphi \quad &\text{iff} \quad \mathfrak{M}, (i^V, y) \models \varphi, \\
    \mathfrak{M}, (x, y) \models @_a \varphi \quad &\text{iff} \quad \mathfrak{M}, (x, a^V) \models \varphi.
  \end{align*}

  A formula $\varphi$ is \emph{valid on an $\mathfrak{M}$}, denoted by $\mathfrak{M} \models \varphi$, if $\mathfrak{M}, (x, y) \models \varphi$ holds for all $(x, y) \in W $. Also, if $\mathfrak{M} \models \varphi$ holds for all models $\mathfrak{M}$, we say that $\varphi$ is \emph{valid}, and we write $\models \varphi$.

  Moreover, for two finite sets of formulae $\Gamma$ and $\Delta$, we write $\Gamma \models \Delta$ if $\models \bigwedge \Gamma \rightarrow \bigvee \Delta$ holds.
  \label{defsatis}
\end{dfn}

Intuitively, $\diamondsuit_1 \varphi$ ($\dia_2 \varphi$) represents ``$\varphi$ holds somewhere in the world reachable from here by $R_h$ ($R_v$).'' And $@_i @_a \varphi$ means ``$\varphi$ holds at the point where $i$ and $a$ hold'' or ``$\varphi$ holds at the point $(i,a)$.''

By the definition of $R_h$ and $R_v$, we can rewrite the definition of the satisfaction relation for a formula that has the form $\diamondsuit_k \varphi (k=1,2)$:

\begin{align*}
  \mathfrak{M}, (x, y) \models \diamondsuit_1 \varphi \quad &\text{iff} \quad \text{there is some} \ x' \in W_1 \ \text{s.t.} \ xR_1x' \ \text{and} \ \mathfrak{M}, (x', y) \models \varphi, \\
  \mathfrak{M}, (x, y) \models \diamondsuit_2 \varphi \quad &\text{iff} \quad \text{there is some} \ y' \in W_2 \ \text{s.t.} \ yR_2y' \ \text{and} \ \mathfrak{M}, (x, y') \models \varphi. \\
\end{align*}

We can derive the following satisfaction relation for $\lor , \rightarrow , \square_1,$ and $\square_2$:
\begin{align*}
  \mathfrak{M}, (x, y) \models \varphi \lor \psi \quad &\text{iff} \quad \mathfrak{M}, (x, y) \models \varphi \ \text{or} \ \mathfrak{M}, (x, y) \models \psi, \\
  \mathfrak{M}, (x, y) \models \varphi \rightarrow \psi \quad &\text{iff} \quad \text{if} \ \mathfrak{M}, (x, y) \models \varphi , \ \text{then} \ \mathfrak{M}, (x, y) \models \psi, \\
  \mathfrak{M}, (x, y) \models \square_1 \varphi \quad &\text{iff} \quad \text{for all} \ (x', y') \ , \text{if} \ (x, y)R_h(x', y') , \ \text{then} \ \mathfrak{M}, (x', y') \models \varphi \\
  &\text{iff} \quad \text{for all} \ x' \in W_1, \ \text{if} \ xR_1x' , \ \text{then} \ \mathfrak{M}, (x', y) \models \varphi, \\
  \mathfrak{M}, (x, y) \models \square_2 \varphi \quad &\text{iff} \quad \text{for all} \ (x', y') \ , \text{if} \ (x, y)R_v(x', y') , \ \text{then} \ \mathfrak{M}, (x', y') \models \varphi \\
  &\text{iff} \quad \text{for all} \ y' \in W_2, \ \text{if} \ yR_2y' , \ \text{then} \ \mathfrak{M}, (x, y') \models \varphi. \\
\end{align*}

\section{Internalized Tableau}
\label{sectableau}

For modal logic, the most established tableau calculus is a prefixed tableau calculus\cite{priest2008}. In this system, each formula occurs with a prefix representing a possible world. In hybrid logic, however, we can remove all prefixes. First, an expression of the form ``$i: \varphi$'' that appears in the prefix tableau can be written as $@_i \varphi$. Also, the notation $irj$, which shows reachability, can be written as $@_i \dia j$. Finally, all symbolic sequences that appear in a tableau can be formulae. Such a system is called internalized tableau calculus.

Now, we will construct a tableau calculus for HPL based on an idea proposed in \cite{bolander2007}.

\begin{dfn}
  A \emph{tableau} is a well-founded tree constructed in the following way:
  \begin{itemize}
    \item Start with a formula of the form $@_i @_a \neg \varphi$, where $\varphi$ is a formula and both $i \in \Nom_1$ and $a \in \Nom_2$ do not occur in $\varphi$ (this is called the \emph{root formula}).
    \item For each branch, extend it by applying rules (see Definition \ref{defrules}) to all nodes as often as possible. However, we can no longer add any formula in a branch if at least one of the following conditions is satisfied:
    \begin{enumerate}[i)]
      \item Every new formula generated by applying an arbitrary rule already exists in the branch.
      \item The branch is closed (see Definition \ref{defclose}).
    \end{enumerate}
  \end{itemize}
  Here, a \emph{branch} means a maximal path of the tableau. If a formula $\varphi$ occurs in a branch $\Theta$, then we write $\varphi \in \Theta$.
  \label{deftableau}
\end{dfn}

\begin{dfn}
  A branch of a tableau $\Theta$ is \emph{closed} if $\Theta$ satisfies at least one of the following three conditions:
  \begin{enumerate}[i)]
    \item There is some $\varphi$, $i \in \Nom_1$ and $a \in \Nom_2$ such that $@_i @_a \varphi, @_i @_a \neg \varphi \in \Theta$.
    \item There is some $\varphi$ and $i \in \Nom_1$ such that $@_i \varphi, @_i \neg \varphi \in \Theta$.
    \item There is some $\varphi$ and $a \in \Nom_2$ such that $@_a \varphi, @_a \neg \varphi \in \Theta$.
  \end{enumerate}

  We say that $\Theta$ is \emph{open} if it is not closed. A tableau is called \emph{closed} if all branches in the tableau are closed.
  \label{defclose}
\end{dfn}

\begin{dfn}
  We provide the rules of the tableau calculus for HPL in Figure \ref{figrules}.
  \begin{figure}[t]
  \begin{gather*}
    \infer[{[\neg \neg]}]
      {@_i @_a \varphi}
      {@_i @_a \neg \neg \varphi}
    \qquad
    \infer[{[\land]}]
      {\deduce{@_i @_a \psi} {@_i @_a \varphi}}
      {@_i @_a (\varphi \land \psi)}
    \qquad
    \infer[{[\neg \land]}]
      {@_i @_a \neg \varphi \mid @_i @_a \neg \psi}
      {@_i @_a \neg (\varphi \land \psi)} \\
    \\
    \infer[{[\diamondsuit_1]}^{*1, *3}]
      {\deduce{@_j @_a \varphi}{@_i \diamondsuit_1 j}}
      {@_i @_a \diamondsuit_1 \varphi}
    \qquad
    \infer[{[\diamondsuit_2]}^{*2, *3}]
      {\deduce{@_i @_b \varphi}{@_a \diamondsuit_2 b}}
      {@_i @_a \diamondsuit_2 \varphi}
    \qquad
    \infer[{[\neg \diamondsuit_1]}]
      {@_j @_a \neg \varphi}
      {\deduce{@_i \diamondsuit_1 j}{@_i @_a \neg \diamondsuit_1 \varphi}}
    \qquad
    \infer[{[\neg \diamondsuit_2]}]
      {@_i @_b \neg \varphi}
      {\deduce{@_a \diamondsuit_2 b}{@_i @_a \neg \diamondsuit_2 \varphi}} \\
    \\
    \infer[{[@_1]}]
      {@_j @_a \varphi}
      {@_i @_a @_j \varphi}
    \qquad
    \infer[{[@_2]}]
      {@_i @_b \varphi}
      {@_i @_a @_b \varphi}
    \qquad
    \infer[{[\neg @_1]}]
      {@_j @_a \neg \varphi}
      {@_i @_a \neg @_j \varphi}
    \qquad
    \infer[{[\neg @_2]}]
      {@_i @_b \neg \varphi}
      {@_i @_a \neg @_b \varphi} \\
    \\
    \infer[{[Red_1]}^{*4}]
      {@_i s}
      {@_i @_a s}
    \qquad
    \infer[{[Red_2]}^{*5}]
      {@_a t}
      {@_i @_a t}
    \qquad
    \infer[{[\neg_1]}]
      {@_j j}
      {@_i \neg j}
    \qquad
    \infer[{[\neg_2]}]
      {@_b b}
      {@_a \neg b} \\
    \\
    \infer[{[Id_{1}]}]
      {@_j @_a \varphi}
      {\deduce{@_i j} {@_i @_a \varphi}}
    \qquad
    \infer[{[Id_{2}]}]
      {@_i @_b \varphi}
      {\deduce{@_a b} {@_i @_a \varphi}}
    \qquad
    \infer[{[Id'_{1}]}^{*4}]
      {@_j s}
      {\deduce{@_i j} {@_i s}}
    \qquad
    \infer[{[Id'_{2}]}^{*5}]
      {@_b t}
      {\deduce{@_a b} {@_a t}}
    \qquad
  \end{gather*}

  *1: $j \in \Nom_1$ does not occur in the branch.

  *2: $b \in \Nom_2$ does not occur in the branch.

  *3: This rule can be applied only one time per formula.

  *4: $s = k, \neg k \ (k \in \Nom_1)$.

  *5: $t = c, \neg c \ (c \in \Nom_2)$.

  In these rules, the formulae above the line show the formulae that have already occurred in the branch, and the formulae below the line show the formulae that will be added to the branch. The vertical line in the $[\neg \land]$ rule means that the branch splits to the left and right.
  \caption{The rules of the tableau calculus for HPL.}
  \label{figrules}
  \end{figure}
  \label{defrules}
\end{dfn}

Most of these rules are based on the method of \cite{bolander2007}. However, since we have two dimensions, the rules related to the modal and hybrid languages are split into two rules. For example, instead of $[\dia]$ in the tableau calculus for one-dimensional hybrid logic, we have $[\dia_1]$ and $[\dia_2]$ in the tableau calculus of HPL. Note that the formula $@_i \dia_1 j$ generated by $[\dia_1]$ and the formula $@_a \dia_2 b$ generated by $[\dia_2]$ relate to two accessibility relations. Since the relations of the two dimensions are independent of each other, this formula consists only of the materials related to $\Nom_1$.

The rules $[Red_1]$ and $[Red_2]$ reflect the axioms Red$@_1$ and Red$@_2$ in \cite{sano2010}, respectively. These are:
\begin{align*}
  \text{Red$@_1$:} &\quad @_i a \leftrightarrow a, \\
  \text{Red$@_2$:} &\quad @_a i \leftrightarrow i,
\end{align*}
where $i \in \Nom_1$ and $a \in \Nom_2$. Using these rules, hybrid formulae not only of the form $@_i @_a \varphi$ but also of the form $@_i \varphi$ or $@_a \varphi$ occur in a tableau. Then, we need four $[Id]$ rules for each dimension and each formula, single or double ``prefix.''

\begin{rem}
  Considering that $\square_1 \varphi \equiv \neg \diamondsuit_1 \neg \varphi$, the rule $[\neg \diamondsuit_1]$ is equivalent to the following rule:
  \[
    \infer[{[\square_1]}]
      {@_j @_a \varphi}
      {\deduce{@_i \diamondsuit_1 j}{@_i @_a \square_1 \varphi}} .
  \] 
  We can say the same for $\square_2$. Moreover, we can create the rules for Boolean operators $\lor$ and $\rightarrow$, using the facts that $\varphi \lor \psi \equiv \neg (\neg \varphi \land \neg \psi)$ and $\varphi \rightarrow \psi \equiv \neg (\varphi \land \neg \psi)$. If we can also use these rules, then we no longer have to replace these symbols in a formula with primitive ones. 
  \label{remsquare}
\end{rem}

\begin{dfn}[provability]
  Given a formula $\varphi$, we say that $\varphi$ is \emph{provable} and write $\vdash \varphi$ if there is a closed tableau whose root formula is $@_i @_a \neg \varphi$, where both $i \in \Nom_1$ and $a \in \Nom_2$ do not occur in $\varphi$.
  
  Also, given two finite sets of formulae $\Gamma$ and $\Delta$, we write $\Gamma \vdash \Delta$ if there is a closed tableau whose root formula is $@_i @_a \neg (\bigwedge \Gamma \rightarrow \bigvee \Delta)$, equivalent to $@_i @_a (\bigwedge \Gamma \land \neg(\bigvee \Delta))$, where both $i \in \Nom_1$ and $a \in \Nom_2$ do not occur in any formula in $\Gamma$ or $\Delta$.
\end{dfn}

\section{Soundness}
\label{secsound}

In this section, we show that the tableau calculus of HPL is sound for the class of product frames, i.e., if $\vdash \varphi$, then $\models \varphi$.

First, we need to make some preparations. Let us denote by $\Theta^n$ a fragment of a branch $\Theta$ that we can get by using rules $n$ times from the root formula. 

Next, we define the concept \emph{faithfulness} for a branch $\Theta$ and a model $\mathfrak{M}$. This is named after \cite[p. 31, Definition 2.9.2]{priest2008}.

\begin{dfn}[faithful model]
Given a branch (or its fragment) $\Theta$ and a model $\mathfrak{M} = (\mathfrak{F}, V)$, we say that $\mathfrak{M}$ is \emph{faithful} to $\Theta$ if all of the following three conditions hold:
  \begin{enumerate}[i)]
    \item If $@_j @_b \psi \in \Theta$, then $\mathfrak{M}, (j^V, b^V) \models \psi$. \label{sound1}
    \item If $@_j s \in \Theta \ (s = k, \neg k, \diamondsuit_1 k)$, then for all $y \in W_2$, $\mathfrak{M}, (j^V, y) \models s$.\label{sound2}
    \item If $@_b t \in \Theta \ (t = c, \neg c, \diamondsuit_2 c)$, then for all $x \in W_1$, $\mathfrak{M}, (x, b^V) \models t$.\label{sound3}
  \end{enumerate}  
  \label{deffaithful}
\end{dfn}

First, we prove this lemma. This is a ``step case'' of Lemma \ref{lemsound2}.

\begin{lem}
  Let $\Theta = \Theta^m$ be a fragment of a branch and $\mathfrak{M} = (\mathfrak{F}, V)$ a model that is faithful to $\Theta^m$. Let $\Theta^{m+1}$ be a new branch (possibly not the only one) that we acquire by applying one of the rules to $\Theta^m$. Then, there exists a model $\mathfrak{M}'=(\mathfrak{F},V')$ such that $\mathfrak{M}'$ is faithful to $\Theta^{m+1}$.
  \label{lemsound}
\end{lem}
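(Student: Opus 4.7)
The plan is to proceed by case analysis on which rule extends $\Theta^m$ to $\Theta^{m+1}$. In most cases I can take $\mathfrak{M}' = \mathfrak{M}$; the only exceptions are $[\diamondsuit_1]$ and $[\diamondsuit_2]$, which introduce a fresh nominal and so require modifying $V$ only at that one nominal.

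For the cases where $V' = V$ suffices, each rule's conclusion is a semantic consequence of its premises under faithfulness of $\mathfrak{M}$ to $\Theta^m$. For instance, for $[\neg \diamondsuit_1]$, the premise $@_i \diamondsuit_1 j$ together with condition (ii) (taking $y = a^V$) gives $i^V R_1 j^V$; combined with $\mathfrak{M}, (i^V, a^V) \models \neg \diamondsuit_1 \varphi$ from condition (i), this forces $\mathfrak{M}, (j^V, a^V) \models \neg \varphi$, which is condition (i) for the new formula. The rules $[Red_1]$ and $[Red_2]$ exploit that truth of a $\Nom_1$-nominal depends only on the first coordinate, so $\mathfrak{M}, (i^V, a^V) \models k$ iff $\mathfrak{M}, (i^V, y) \models k$ for every $y \in W_2$. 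The branching rule $[\neg \land]$ requires choosing the branch $\Theta^{m+1}$ corresponding to whichever disjunct actually holds at $(i^V, a^V)$ in $\mathfrak{M}$; the other branch need not admit a faithful model. The remaining rules $[\neg \neg], [\land], [@_k], [\neg @_k], [Id_k], [Id'_k], [\neg_k]$ reduce to routine applications of the satisfaction clauses, using that $i^V = j^V$ whenever $@_i j \in \Theta^m$ (by condition (ii)).

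For the $[\diamondsuit_1]$ case applied to $@_i @_a \diamondsuit_1 \varphi$, condition (i) gives $\mathfrak{M}, (i^V, a^V) \models \diamondsuit_1 \varphi$, so I extract $x' \in W_1$ with $i^V R_1 x'$ and $\mathfrak{M}, (x', a^V) \models \varphi$. I define $V'(j) = \{x'\} \times W_2$ and $V'(k) = V(k)$ for every other nominal $k$. Because the fresh nominal $j$ does not occur in $\Theta^m$ by the side condition, no formula in $\Theta^m$ mentions $j$, so their truth values under $\mathfrak{M}'$ coincide with those under $\mathfrak{M}$ and faithfulness to $\Theta^m$ is preserved. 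For the newly added $@_j @_a \varphi$, condition (i) reduces (since $j$ does not appear in $\varphi$) to $\mathfrak{M}, (x', a^V) \models \varphi$, which is the choice of $x'$. For the newly added $@_i \diamondsuit_1 j$, condition (ii) demands $\mathfrak{M}', (i^V, y) \models \diamondsuit_1 j$ for every $y \in W_2$; since $i^V R_1 x' = j^{V'}$, the point $(x', y)$ witnesses $\diamondsuit_1 j$ at $(i^V, y)$ regardless of $y$. The $[\diamondsuit_2]$ case is symmetric.

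The main obstacle is precisely the $[\diamondsuit_k]$ cases: the diamond witness $x'$ is extracted only at the single point $(i^V, a^V)$, yet the newly added accessibility formula $@_i \diamondsuit_1 j$ must satisfy the universally quantified condition (ii) across every $y \in W_2$. This works because $R_h$ in the product frame is independent of the second coordinate, so a single witness in $W_1$ suffices to establish $\diamondsuit_1 j$ on the entire vertical fibre above $i^V$---the very feature of product frames that makes the internalized calculus natural.
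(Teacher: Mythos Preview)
Your proposal is correct and follows essentially the same approach as the paper: a case analysis on the rule applied, with $\mathfrak{M}' = \mathfrak{M}$ in all cases except $[\diamondsuit_1]$ and $[\diamondsuit_2]$, where the valuation is redefined at the single fresh nominal to name the witness extracted from the diamond semantics. The paper spells out only the $[\diamondsuit_1]$ and $[@_1]$ cases and leaves the rest to the reader; you supply more of that detail (including the correct handling of the branching rule $[\neg\land]$ and the observation that the product structure of $R_h$ is exactly what makes condition~(ii) go through for the new accessibility formula), but the underlying argument is the same.
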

Then, there exists a model $\mathfrak{M}'=(\mathfrak{F},V')$ such that $\mathfrak{M}'$ is faithful to $\Theta^{m+1}$.
\begin{proof}
  This proof is divided into different cases depending on which rule is applied to $\Theta^m$. We give the proof only for the cases $[\diamondsuit_1]$ and $[@_1]$. The other cases are left to the reader.

  \begin{description}
    \item[{$[\diamondsuit_1]$}] Suppose that we acquire two new formulae $@_j \diamondsuit_1 k$ and $@_k @_b \psi$ by applying $[\diamondsuit_1]$ to $\Theta^m$. Let $\Theta^{m+1}$ be a branch that contains both formulae. Since we can apply $[\diamondsuit_1]$ to $\Theta^m$, $@_j @_b \diamondsuit_1 \psi \in \Theta^m$ holds, and by assumption $\mathfrak{M}, (j^V, b^V) \models \diamondsuit_1 \psi$ holds. This implies that there is some $x' \in W_1$ such that $j^V R_1 x'$ and $\mathfrak{M}, (x', b^V) \models \psi$. Now, we define a new valuation function $V'$:
    \[
     V'(s) = 
     \begin{cases}
       \{ x' \} \times W_2 & (s = k) \\
       V(s) & (\text{otherwise}) .
     \end{cases} 
    \]
    Define $\mathfrak{M}' = (\mathfrak{F}, V')$. Then, we have $j^{V'} R_1 k^{V'}$ and $\mathfrak{M}', (k^{V'}, b^{V'}) \models \psi$. By the latter, Definition \ref{deffaithful} (\ref{sound1}) holds for $@_k @_b \psi \in \Theta^{m+1}$ and $(k^{V'}, b^{V'})$. By the former, $\mathfrak{M}', (j^{V'}, y) \models \diamondsuit_1 k$ holds for an arbitrary $y$. Then, Definition \ref{deffaithful} (\ref{sound2}) holds for $@_j \diamondsuit_1 k \in \Theta^{m+1}$ and $j^{V'}$. Moreover, since $k$ does not occur in $\Theta^m$, the interpretation of any formula in $\Theta^m$ does not change even if $V$ is changed into $V'$. Therefore, $\mathfrak{M}'$ is faithful to $\Theta^{m+1}$.
    
    \item[{$[@_1]$}] Suppose that we acquire a formula $@_k @_b \psi$ by applying $[@_1]$ to $\Theta^m$. Let $\Theta^{m+1}$ be a branch that contains the formula. Since we can apply $[@_1]$ to $\Theta^m$, $@_j @_b @_k \psi \in \Theta^m$ holds. Then, by assumption, $\Kmodel, (j^V, b^V) \models @_k \psi$ holds. By this fact, we straightforwardly acquire $\Kmodel, (k^V, b^V) \models \psi$, which implies that $\mathfrak{M}$ is faithful to $\Theta^{m+1}$.
  \end{description}
\end{proof}

\begin{lem}
  Let $\mathcal{T}$ be a tableau whose root formula is $@_i @_a \varphi$ and all of whose branches are finite, and let $\mathfrak{M} = (\mathfrak{F}, V)$ be a product model such that a possible world $(i^V, a^V) \in W$ satisfies $\mathfrak{M}, (i^V, a^V) \models \varphi$. Then, there exists a pair of a branch $\Theta$ in $\mathcal{T}$ and a model $\mathfrak{M}' = (\mathfrak{F}, V')$ such that $\mathfrak{M}'$ is faithful to $\Theta$.
  \label{lemsound2}
\end{lem}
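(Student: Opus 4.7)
The natural approach is an induction on the depth $n$ of a fragment, using Lemma~\ref{lemsound} as the stepping stone. I would construct a chain $\Theta^0 \subseteq \Theta^1 \subseteq \cdots$ of fragments lying along a common path in $\mathcal{T}$, together with product models $\mathfrak{M}^n = (\mathfrak{F}, V^n)$ all sharing the same underlying frame $\mathfrak{F}$, such that each $\mathfrak{M}^n$ is faithful to $\Theta^n$. Because every branch of $\mathcal{T}$ is finite by hypothesis, this chain must terminate at some $\Theta^N$ that is a maximal path of $\mathcal{T}$, i.e., a branch $\Theta$, and then $\mathfrak{M}' := \mathfrak{M}^N$ is the witness demanded by the lemma.

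For the base case I would take $\Theta^0 = \{ @_i @_a \varphi \}$ and $\mathfrak{M}^0 = \mathfrak{M}$. Clause~(\ref{sound1}) of Definition~\ref{deffaithful} reduces to the hypothesis $\mathfrak{M}, (i^V, a^V) \models \varphi$, while clauses~(\ref{sound2}) and~(\ref{sound3}) are vacuously satisfied since no formula of the relevant form occurs in the singleton $\Theta^0$.

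For the inductive step, assume $\mathfrak{M}^m$ is faithful to $\Theta^m$. If $\Theta^m$ is already a leaf of $\mathcal{T}$, I stop and set $\Theta = \Theta^m$, $\mathfrak{M}' = \mathfrak{M}^m$. Otherwise, I apply the rule that $\mathcal{T}$ uses next at this node. For any non-branching rule, Lemma~\ref{lemsound} immediately yields a faithful $\mathfrak{M}^{m+1}$ on the same frame (possibly with the valuation modified on a freshly introduced nominal, as in the $[\diamondsuit_1]$ case treated in its proof). The main obstacle, and the reason the statement is not a direct consequence of Lemma~\ref{lemsound}, is the branching rule $[\neg \land]$: when it is applied to some $@_i @_a \neg(\psi \land \chi) \in \Theta^m$, faithfulness of $\mathfrak{M}^m$ yields $\mathfrak{M}^m, (i^V, a^V) \models \neg\psi$ or $\mathfrak{M}^m, (i^V, a^V) \models \neg\chi$, and I must descend into whichever of the two sub-branches of $\mathcal{T}$ carries the disjunct actually satisfied by $\mathfrak{M}^m$; with that choice, $\mathfrak{M}^{m+1} = \mathfrak{M}^m$ stays faithful to $\Theta^{m+1}$. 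At every splitting step, then, the current model dictates which sub-branch the chain descends into, which is exactly what is needed to keep the chain of fragments inside $\mathcal{T}$.

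The induction terminates after finitely many steps by the finiteness hypothesis on branches of $\mathcal{T}$, and the resulting $\Theta = \Theta^N$ is by construction a full branch of $\mathcal{T}$ with $\mathfrak{M}^N$ faithful to it, completing the argument.
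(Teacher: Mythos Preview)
Your proposal is correct and follows essentially the same approach as the paper: induct on the depth of the fragment, starting from $\Theta^0=\{@_i@_a\varphi\}$ with $\mathfrak{M}^0=\mathfrak{M}$, and repeatedly invoke Lemma~\ref{lemsound} until a full branch is reached. Your treatment is in fact more explicit than the paper's (which compresses the whole argument into two sentences), particularly in spelling out how the current model selects the sub-branch at the $[\neg\land]$ split.
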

\begin{proof}
  By assumption, $\mathfrak{M}$ is faithful to the fragment of the branch $\Theta^0$ that we can get by applying the rules $0$ times to the root formula (the fragment that contains only the root formula). Using this fact and Lemma \ref{lemsound}, we can construct a model $\mathfrak{M}' = (\mathfrak{F}, V')$ that is faithful to at least one branch $\Theta$. This is the desired pair of a branch and a model.
\end{proof}

Lemma \ref{lemsound2} holds only for a finite tableau. To prove soundness, we have to show that every closed tableau is finite.

\begin{lem}
  If a tableau is closed, all the branches of it are finite.
  \label{lemsound3}
\end{lem}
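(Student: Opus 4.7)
My plan is to exploit the stopping condition (ii) of Definition \ref{deftableau} to bound the length of any closed branch. The key observation is that each node of a tableau sits at a finite depth from the root, so the two formulae witnessing closure of a branch both appear within a finite initial segment, and no further extension is permitted beyond that point.

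Concretely, I would fix an arbitrary branch $\Theta$ of the closed tableau $\mathcal{T}$, and invoke Definition \ref{defclose} to select two formulae $\chi_1, \chi_2 \in \Theta$ that witness its closure (for instance, a pair of the form $@_i @_a \psi$ and $@_i @_a \neg \psi$ in case (i), and symmetrically in cases (ii) and (iii)). Since every rule in Figure \ref{figrules} appends only finitely many nodes below an existing one, both $\chi_1$ and $\chi_2$ lie at finite depths $n_1$ and $n_2$. Setting $n := \max(n_1, n_2)$, the initial segment of $\Theta$ down to depth $n$ already contains both closing formulae, so the branch is closed at depth $n$. Stopping condition (ii) of Definition \ref{deftableau} then prevents any further growth of $\Theta$, and so $\Theta$ has length at most $n + 1$, which is finite. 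Since $\Theta$ was arbitrary, every branch of $\mathcal{T}$ is finite.

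The main subtle point I expect to encounter is the phrase ``well-founded tree'' in Definition \ref{deftableau}. If it is read as saying that every branch is already finite, the lemma becomes essentially vacuous; however, the motivation of the lemma and the non-terminating character of rules such as $[\diamondsuit_1]$ (which freshly invents nominals and can in principle fire indefinitely on fresh witnesses) make clear that a tableau may a priori carry infinite branches. I would therefore read ``well-founded'' in the permissive sense (perhaps merely excluding transfinite depth), and rely purely on the explicit stopping conditions to conclude that a closed branch cannot extend past the depth at which its closing pair first coexists.
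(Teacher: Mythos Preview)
Your proposal is correct and follows essentially the same approach as the paper: pick the two closing formulae on an arbitrary branch, observe they sit at finite depth, and invoke stopping condition (ii) of Definition~\ref{deftableau} to conclude the branch cannot extend further. The paper phrases the last step more tersely (``one of the formulas should be at the end of the branch''), whereas you spell out the $\max(n_1,n_2)$ argument and add a sensible remark on how to read ``well-founded,'' but the underlying idea is identical.
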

\begin{proof}
  Take an arbitrary branch $\Theta$ of a closed tableau. Then, there are formulas $@_i @_a \varphi$ and $@_i @_a \varphi$ in $\Theta$ ($@_i @_a \varphi$ may have the form $@_i \varphi$ or $@_a \varphi$, but this does not affect the proof.) By the construction of tableau (Definition \ref{deftableau}), one of the formulas should be at the end of the branch. Therefore, $\Theta$ is finite.
\end{proof}

\begin{thm}[soundness]
  The tableau calculus of HPL is sound for the class of product frames, i.e., if $\vdash \varphi$, then $\models \varphi$.
  \label{thmsound}
\end{thm}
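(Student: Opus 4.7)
The plan is to prove the contrapositive: assume $\not\models\varphi$, and show that no closed tableau with root $@_i@_a\neg\varphi$ exists. So suppose $\mathfrak{M}=(\mathfrak{F},V)$ is a product model and $(x_0,y_0)\in W$ with $\mathfrak{M},(x_0,y_0)\models\neg\varphi$. Since $i\in\Nom_1$ and $a\in\Nom_2$ do not occur in $\varphi$, I can modify $V$ on $i$ and $a$ only, putting $V^\ast(i)=\{x_0\}\times W_2$ and $V^\ast(a)=W_1\times\{y_0\}$, and keeping $V^\ast=V$ on every other propositional letter or nominal. By the locality of the semantics on $\varphi$ (which mentions neither $i$ nor $a$), the resulting model $\mathfrak{M}^\ast$ still satisfies $\mathfrak{M}^\ast,(i^{V^\ast},a^{V^\ast})\models\neg\varphi$, so $\mathfrak{M}^\ast,(i^{V^\ast},a^{V^\ast})\models\neg\varphi$, i.e.\ the root formula is true at $(i^{V^\ast},a^{V^\ast})$.

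Next, suppose for contradiction that there exists a closed tableau $\mathcal{T}$ with this root. By Lemma~\ref{lemsound3}, every branch of $\mathcal{T}$ is finite. The model $\mathfrak{M}^\ast$ is faithful to the single-node fragment $\Theta^0$ consisting only of the root formula: condition~(\ref{sound1}) of Definition~\ref{deffaithful} is witnessed by the sentence above, and conditions~(\ref{sound2}) and~(\ref{sound3}) hold vacuously because no formula of the form $@_j s$ or $@_b t$ appears in $\Theta^0$. Hence the hypothesis of Lemma~\ref{lemsound2} is satisfied, and we obtain some branch $\Theta$ of $\mathcal{T}$ and a model $\mathfrak{M}'=(\mathfrak{F},V')$ faithful to $\Theta$.

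Because $\mathcal{T}$ is closed, this branch $\Theta$ is closed, so one of the three clauses of Definition~\ref{defclose} applies. In the first case, there are $\psi$, $j\in\Nom_1$, $b\in\Nom_2$ with $@_j@_b\psi,\,@_j@_b\neg\psi\in\Theta$; clause~(\ref{sound1}) of faithfulness then yields $\mathfrak{M}',(j^{V'},b^{V'})\models\psi$ and $\mathfrak{M}',(j^{V'},b^{V'})\models\neg\psi$, an outright contradiction. In the second case, $@_j\psi,\,@_j\neg\psi\in\Theta$ with $\psi$ a (possibly negated) nominal of $\Nom_1$; clause~(\ref{sound2}) — or rather the strengthened reading guaranteed by the Red-rules and the shape restriction on $s$ — gives $\mathfrak{M}',(j^{V'},y)\models\psi$ and $\mathfrak{M}',(j^{V'},y)\models\neg\psi$ for any $y\in W_2$, again a contradiction; the third case is symmetric in $\Nom_2$. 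This contradicts the assumption that $\mathcal{T}$ is closed, proving $\not\vdash\varphi$.

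The only non-routine point is the initial adjustment of the valuation: we have to ensure that the counterexample world can actually be named by the specific nominals $i$ and $a$ fixed by the tableau's root. The disjointness requirement in Definition~\ref{deftableau} that $i,a$ do not occur in $\varphi$ is exactly what licenses this, so the argument goes through cleanly. Everything else is just an assembly of Lemmas~\ref{lemsound2} and~\ref{lemsound3} together with Definition~\ref{defclose}.
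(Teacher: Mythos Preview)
Your proof is correct and follows essentially the same route as the paper's: both argue by contradiction, adjust the valuation on the fresh nominals $i,a$ so that the countermodel satisfies the root formula, invoke Lemma~\ref{lemsound3} for finiteness, Lemma~\ref{lemsound2} to obtain a faithful model for some branch, and then derive a contradiction from the three closure clauses. Your treatment is in fact slightly more explicit than the paper's in two places---you spell out why $\mathfrak{M}^\ast$ is faithful to $\Theta^0$, and you note the shape restriction on single-prefix formulas needed for cases~(ii) and~(iii)---whereas the paper simply says ``we can similarly show a contradiction.''
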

\begin{proof}
  By \textit{reductio ad absurdum}. Assume that both $\vdash \varphi$ and $\not \models \varphi$. From the latter, there are a model $\mathfrak{M} = (\mathfrak{F}, V)$ and its possible world $(x, y)$ such that $\mathfrak{M}, (x, y) \models \neg \varphi$. Now, take arbitrary nominals $i \in \Nom_1$ and $a \in \Nom_2$ both of which do not occur in $\varphi$. Also, define $V'$ as follows:
  \[
    V'(s) = 
    \begin{cases}
      \{ x \} \times W_2 & (s = i) \\
      W_1 \times \{ y \} & (s = a) \\
      V(s) & (\text{otherwise}) .
    \end{cases}
  \]
  Thus, for $\mathfrak{M}' = (\mathfrak{F}, V')$, we have $\mathfrak{M}', (i^{V'}, a^{V'}) \models \neg \varphi$. Now, the assumption that $\vdash \varphi$ implies that there is a tableau $\mathcal{T}$ whose root formula is $@_i @_a \neg \varphi$. By Lemma \ref{lemsound3}, all branches in $\mathcal{T}$ are finite. Then, by Lemma \ref{lemsound2}, there is a pair of a branch $\Theta$ of the tableau and a model $\mathfrak{M}'' = (\mathfrak{F}, V'')$ such that $\mathfrak{M}''$ is faithful to $\Theta$.

  Since $\mathcal{T}$ is closed, so is $\Theta$. Then, at least one of the following statements holds:
  \begin{enumerate}
    \item There are a pair $(j, b)$ of nominals and a formula $\psi$ such that $@_j @_b \psi, @_j @_b \neg \psi \in \Theta$. \label{cl1}
    \item There are a nominal $j \in \Nom_1$ and a formula $\psi$ such that $@_j \psi, @_j \neg \psi \in \Theta$. \label{cl2}
    \item There are a nominal $b \in \Nom_2$ and a formula $\psi$ such that $@_b \psi, @_b \neg \psi \in \Theta$. \label{cl3}
  \end{enumerate}
  Suppose that case \ref{cl1} holds. Then, by Definition \ref{deffaithful} (\ref{sound1}), both $\mathfrak{M}'', (j^{V''}, b^{V''}) \models \psi$ and $\mathfrak{M}'', (j^{V''}, b^{V''}) \models \neg \psi$ hold, but that is a contradiction. We can similarly show a contradiction in cases \ref{cl2} and \ref{cl3}.
\end{proof}

\begin{cor}
  Given two finite sets of formulae $\Gamma$ and $\Delta$, if $\Gamma \vdash \Delta$, then $\Gamma \models \Delta$. 
\end{cor}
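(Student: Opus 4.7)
The plan is to reduce the corollary directly to Theorem \ref{thmsound} by unpacking definitions. Assuming $\Gamma \vdash \Delta$, the definition of provability for sets gives a closed tableau whose root formula is $@_i @_a \neg(\bigwedge \Gamma \rightarrow \bigvee \Delta)$, with $i \in \Nom_1$ and $a \in \Nom_2$ chosen so that neither occurs in any formula of $\Gamma \cup \Delta$. Since the nominals occurring in a Boolean combination are exactly those occurring in its components, $i$ and $a$ do not occur in the compound formula $\bigwedge \Gamma \rightarrow \bigvee \Delta$ either. This is precisely the freshness condition demanded by the single-formula definition of $\vdash$, so the existence of that closed tableau witnesses $\vdash \bigwedge \Gamma \rightarrow \bigvee \Delta$.

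From here I would invoke Theorem \ref{thmsound} on the formula $\bigwedge \Gamma \rightarrow \bigvee \Delta$ to obtain $\models \bigwedge \Gamma \rightarrow \bigvee \Delta$, and then appeal to the final clause of Definition \ref{defsatis}, which declares this to be the very meaning of $\Gamma \models \Delta$. That closes the argument.

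There is no substantive obstacle: the corollary is essentially a notational rephrasing of Theorem \ref{thmsound} lifted from a single formula to the set-set presentation, and both the $\vdash$ side and the $\models$ side were explicitly defined via the implication $\bigwedge \Gamma \rightarrow \bigvee \Delta$. The only thing requiring attention is the match-up of the freshness constraints on $i$ and $a$ across the two definitions, and this is immediate from the inductive construction of formulae.
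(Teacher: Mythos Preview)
Your proof is correct and follows exactly the paper's approach: apply Theorem \ref{thmsound} with $\varphi = \bigwedge \Gamma \rightarrow \bigvee \Delta$ and unfold the definitions of $\Gamma \vdash \Delta$ and $\Gamma \models \Delta$. The paper's own proof is the one-line version of what you wrote, and your extra care about the freshness of $i$ and $a$ is a welcome clarification but not a departure in method.
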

\begin{proof}
  By Theorem \ref{thmsound} and letting $\varphi = \bigwedge \Gamma \rightarrow \bigvee \Delta$.
\end{proof}

\section{Completeness}
\label{seccomplete}

In this section, we show that the tableau calculus of HPL is complete for the class of product frames, i.e., if $\models \varphi$, then $\vdash \varphi$.

First, we define an important concept, \emph{quasi-subformula}.

\begin{dfn}[quasi-subformula]
  Given two formulae $@_i @_a \varphi$ and $@_j @_b \psi$, we say that $@_i @_a \varphi$ is \emph{a quasi-subformula} of $@_j @_b \psi$ if one of the following statements holds:
  \begin{itemize}
    \item $\varphi$ is a subformula of $\psi$.
    \item $\varphi$ has the form $\neg \chi$, and $\chi$ is a subformula of $\psi$.
  \end{itemize}
\end{dfn}

\begin{lem}[quasi-subformula property]
  Let $\mathcal{T}$ be a tableau. If a formula in $\mathcal{T}$ has the form $@_i @_a \varphi$, then it is a quasi-subformula of the root formula of $\mathcal{T}$.
  \label{lemquasi}
\end{lem}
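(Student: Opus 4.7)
The plan is to induct on the least $n$ such that $@_i @_a \varphi$ appears in some fragment $\Theta^n$ of a branch of $\mathcal{T}$. For the base case $n = 0$, the only such formula is the root $@_i @_a \neg \varphi_0$, whose body $\neg \varphi_0$ is a subformula of $\neg \varphi_0$, so it is a quasi-subformula of the root by the first clause of the definition.

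For the inductive step I would do a case analysis on the rule whose application generated the new occurrence of $@_i @_a \varphi$. A preliminary observation trims the bookkeeping sharply: the rules $[Red_1], [Red_2], [\neg_1], [\neg_2], [Id'_1], [Id'_2]$ have only single-prefix conclusions of the shape $@_j s$ or $@_b t$, so they are irrelevant to this lemma. Every remaining rule has a double-prefix premise $@_j @_b \psi$ whose body $\psi$ is related to the conclusion's body in exactly one of three uniform ways: it coincides with it ($[Id_1], [Id_2]$), it has it as an immediate subformula ($[\neg \neg], [\land], [\dia_1], [\dia_2], [@_1], [@_2]$), or it has its negation as an immediate subformula ($[\neg \land], [\neg \dia_1], [\neg \dia_2], [\neg @_1], [\neg @_2]$). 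In each case the inductive hypothesis applied to the premise, together with a single elementary observation about subformula closure, gives that the conclusion is again a quasi-subformula of the root.

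The step that deserves the most care is $[\neg \neg]$, where the premise $@_i @_a \neg \neg \chi$ has a body that is already a negation, so both clauses of the quasi-subformula definition must be inspected. If $\neg \neg \chi$ is a subformula of $\neg \varphi_0$ then $\chi$ is too by closure under taking subformulae, placing the conclusion in clause one. If instead $\neg \neg \chi = \neg \psi$ with $\psi$ a subformula of $\neg \varphi_0$, then $\psi = \neg \chi$ is such a subformula, so $\chi$ is again a subformula and clause one applies. The negation-introducing rules require the symmetric verification: there the conclusion's body has the form $\neg \chi'$ where $\chi'$ is verifiably a subformula of $\neg \varphi_0$, so clause two applies. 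I expect no genuine obstacle beyond the discipline of walking through each rule and tracking whether a leading negation is being added, removed, or preserved.
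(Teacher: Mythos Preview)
Your proposal is correct and follows essentially the same approach as the paper: an induction on the tableau construction with a rule-by-rule case analysis, after discarding the same six single-prefix rules. The paper's own proof is in fact terser than yours---it only spells out the cases $[\neg\land]$ and $[\diamondsuit_1]$---so your explicit treatment of $[\neg\neg]$, where both clauses of the quasi-subformula definition must be inspected, is a genuine addition of rigor rather than a departure in strategy.
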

\begin{proof}
  By induction on the rules of tableau calculus, except these six rules that do not generate a formula of the form $@_i @_a \varphi$: $[Red_1], [Red_2], [\neg_1], [\neg_2], [Id'_1],$ and $[Id'_2]$. We provide the proof only for the cases $[\neg \land]$ and $[\diamondsuit_1]$. The other cases are left to the reader.
  \begin{description}
    \item[{$[\neg \land]$}] $@_i @_a \neg \varphi$ and $@_i @_a \neg \psi$ are both quasi-subformulae of $@_i @_a \neg (\varphi \land \psi)$. Also, $@_i @_a \neg (\varphi \land \psi)$ is a quasi-subformula of the root formula by the induction hypothesis. Therefore, $@_i @_a \neg \varphi$ and $@_i @_a \neg \psi$ are quasi-subformulae of the root formula.
    \item[{$[\diamondsuit_1]$}] It suffices to show that $@_j @_a \varphi$ is a quasi-subformula of the root formula. $@_j @_a \varphi$ is a quasi-subformula of $@_i @_a \diamondsuit_1 \varphi$, and by the induction hypothesis, $@_i @_a \diamondsuit_1 \varphi$ is a quasi-subformula of the root formula.
  \end{description}
\end{proof}

Next, we define the concept of saturation. Intuitively, a branch is saturated if we cannot add any new formula to it even if we can apply some rules to it. The following is the formal definition of that.

\begin{dfn}
  A branch $\Theta$ of a tableau is \emph{saturated} if $\Theta$ satisfies all the conditions below:

  \begin{enumerate}[i)]
    \item If $@_i @_a \neg \neg \varphi \in \Theta$, then $@_i @_a \varphi \in \Theta$. \label{satnegneg}
    \item If $@_i @_a (\varphi \land \psi) \in \Theta$, then $@_i @_a \varphi, @_i @_a \psi \in \Theta$. \label{satand}
    \item If $@_i @_a \neg (\varphi \land \psi) \in \Theta$, then either $@_i @_a \neg \varphi \in \Theta$ or $@_i @_a \neg \psi \in \Theta$. \label{satnegand}
    \item If $@_i @_a \diamondsuit_1 \varphi \in \Theta$, then there is some $j \in \textbf{Nom}_1$ such that $@_i \diamondsuit_1 j \in \Theta, @_j @_a \varphi \in \Theta$. \label{satdia1}
    \item If $@_i @_a \diamondsuit_2 \varphi \in \Theta$, then there is some $b \in \textbf{Nom}_2$ such that $@_a \diamondsuit_2 b, @_i @_b \varphi \in \Theta$. \label{satdia2}
    \item If $@_i @_a \neg \diamondsuit_1 \varphi \in \Theta$ and $@_i \diamondsuit_1 j \in \Theta$, then $@_j @_a \neg \varphi \in \Theta$. \label{satnegdia1}
    \item If $@_i @_a \neg \diamondsuit_2 \varphi \in \Theta$ and $@_a \diamondsuit_2 b \in \Theta$, then $@_i @_b \neg \varphi \in \Theta$. \label{satnegdia2}
    \item If $@_i @_a @_j \varphi \in \Theta$, then $@_j @_a \varphi \in \Theta$. \label{satat1}
    \item If $@_i @_a @_b \varphi \in \Theta$, then $@_i @_b \varphi \in \Theta$. \label{satat2}
    \item If $@_i @_a \neg @_j \varphi \in \Theta$, then $@_j @_a \neg \varphi \in \Theta$. \label{satnegat1}
    \item If $@_i @_a \neg @_b \varphi \in \Theta$, then $@_i @_b \neg \varphi \in \Theta$. \label{satnegat2}
    \item If $s = k, \neg k \ (k \in \Nom_1)$ and $@_i @_a s \in \Theta$, then $@_i s \in \Theta$. \label{satred1}
    \item If $t = c, \neg c \ (c \in \Nom_2)$ and $@_i @_a t \in \Theta$, then $@_a t \in \Theta$. \label{satred2}
    \item If $@_i \neg j \in \Theta$, then $@_j j \in \Theta$. \label{satneg1}
    \item If $@_a \neg b \in \Theta$, then $@_b b \in \Theta$. \label{satneg2}
    \item If $@_i @_a \varphi \in \Theta$ and $@_i j \in \Theta$, then $@_j @_a \varphi \in \Theta$. \label{satid11}
    \item If $@_i @_a \varphi \in \Theta$ and $@_a b \in \Theta$, then $@_i @_b \varphi \in \Theta$. \label{satid12}
    \item If $s = k, \neg k \ (k \in \Nom_1)$, and $@_i s, @_i j \in \Theta$, then $@_j s \in \Theta$. \label{satid21}
    \item If $t = c, \neg c \ (c \in \Nom_2)$, and $@_a t, @_a b \in \Theta$, then $@_b t \in \Theta$. \label{satid22}
  \end{enumerate}
  We call a tableau \emph{saturated} if all branches of it are saturated.
  \label{defsat}
\end{dfn}

The next lemma claims that for an arbitrary root formula, we can make any open branch saturated.

\begin{lem}
  Given an HPL formula $\varphi$, we can construct a tableau whose root formula is $@_i @_a \varphi$ ($i$ and $a$ do not occur in $\varphi$) and of which all the open branches are saturated.
  \label{lemconst}
\end{lem}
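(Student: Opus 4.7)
The plan is to construct the tableau in stages using a fair scheduling discipline for rule applications, then show that in the limit every open branch satisfies each clause of Definition \ref{defsat}. I would build a sequence of finite partial tableaux $\mathcal{T}_0 \subseteq \mathcal{T}_1 \subseteq \cdots$ whose union forms the desired (possibly infinite) tableau $\mathcal{T}$, starting from $\mathcal{T}_0$ consisting of the root $@_i @_a \varphi$. At each stage I maintain a FIFO queue of pending rule applications, indexed by (open branch, tuple of premise formulae, rule), restricted to those that would add at least one formula not already present on that branch. At stage $n+1$ I pop the oldest pending application whose premise formulae still appear together on some open branch of $\mathcal{T}_n$, apply the rule (splitting where required for $[\neg\land]$), and enqueue every new rule-application opportunity created by the formulae just added. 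When $[\diamondsuit_1]$ or $[\diamondsuit_2]$ is applied, the side condition is satisfied by choosing a nominal from $\Nom_1$ or $\Nom_2$ that has not yet appeared in the branch, which is always possible because both sets are countably infinite; side condition *3 guarantees that a given $\diamondsuit$-formula is serviced at most once per branch, so the scheduler does not loop.

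Next, I would verify saturation clause by clause. Fix an open branch $\Theta$ of $\mathcal{T}$ and suppose the hypothesis of some clause of Definition \ref{defsat} is met by finitely many formulae on $\Theta$. Each of those formulae appears on $\Theta$ at some finite stage, so the corresponding rule application is enqueued no later than the stage at which the last premise is added. Since $\Theta$ is open, it is never pruned by closure, and by the FIFO discipline the enqueued application is popped after finitely many other steps. At that point either the required conclusion already lies on $\Theta$ (in which case the clause holds immediately) or the rule genuinely extends $\Theta$ with that conclusion; for $[\neg\land]$ the clause is satisfied on whichever of the two sibling branches happens to be $\Theta$ itself. Applying this argument uniformly to clauses \ref{satnegneg}--\ref{satid22} yields saturation of $\Theta$.

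The main obstacle is ensuring fairness under dynamic rule generation. A fresh nominal $j$ introduced by $[\diamondsuit_1]$ retroactively activates $[\neg\diamondsuit_1]$ against every earlier formula $@_i @_a \neg\diamondsuit_1\psi$ on the branch, and an identification $@_i j$ or $@_a b$ activates $[Id_1]$, $[Id_2]$, $[Id'_1]$, or $[Id'_2]$ against every earlier satisfaction formula with the matching label. I would address this by specifying that, whenever a new formula is appended to a branch, the scheduler immediately enqueues every rule instance that this formula completes with already-present formulae, so that no newly enabled instance is silently skipped. Together with the FIFO discipline this guarantees that every scheduled application is eventually executed in finite time, which is exactly what is needed to discharge the saturation clauses uniformly. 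Termination of the construction is neither claimed nor required: the resulting $\mathcal{T}$ may be infinite, but each $\mathcal{T}_n$ is finite and the limit branches are well-defined maximal paths.
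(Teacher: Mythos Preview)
Your proposal is correct, and the overall architecture---a sequence of finite approximations $\mathcal{T}_0\subseteq\mathcal{T}_1\subseteq\cdots$ whose union is the desired tableau, together with a fairness argument---matches the paper. The difference is in the scheduling discipline. The paper dispenses with the FIFO queue entirely: at stage $n+1$ it applies \emph{every} applicable rule instance to \emph{every} formula on \emph{every} open branch of $\mathcal{T}_n$ simultaneously. Fairness is then automatic, and the saturation argument collapses to one line: if the premises of some clause all lie in $\mathcal{T}_k$, the conclusion lies in $\mathcal{T}_{k+1}$. Your one-application-per-stage FIFO construction is equally valid but requires the extra bookkeeping you identify (re-enqueueing newly enabled instances, propagating queued items across $[\neg\land]$ splits). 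The paper's batch approach buys brevity; yours is closer to an actual implementation and makes the dynamic-activation issue you flag for $[\neg\diamondsuit_k]$ and the $[Id]$ rules explicit rather than swept into the phrase ``whenever it is possible.''
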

\begin{proof}
  Given an HPL formula $\varphi$, we construct a tableau $\mathcal{T}_n,$ as follows:
  \begin{itemize}
    \item $\mathcal{T}_0$ is a tableau with only the node $@_i @_a \neg \varphi$ ($i$ and $a$ do not occur in $\varphi$).
	\item $\mathcal{T}_{n+1}$ is constructed by applying rules to all nodes in each branch of $\mathcal{T}_n$ whenever it is possible. In constructing it, we have to observe the following rules:
	\begin{enumerate}[i)]
	  \item If the new formulae generated by applying some rules already exist in the same branch, they are not added to the branch.
	  \item If a branch becomes closed, we can no longer add any formula.
    \end{enumerate}
  \end{itemize}
  Now, we define $\mathcal{T}^*$ as the closure of all $\mathcal{T}_n \ (n \in \mathbb{N})$. Then, all the open branches of $\mathcal{T}^*$ are saturated; to prove that, suppose otherwise. Then, there exists a branch $\Theta$ such that one of the conditions of Definition \ref{defsat} does not hold. Let $@_j @_b \varphi$ (it may be of the form $@_j \psi$ or $@_b \psi$) be the formula that falsifies the condition of Definition \ref{defsat} with respect to $\Theta$. Now, we can find an integer $k \in \mathbb{N}$ such that $@_j @_b \varphi$ is in $\mathcal{T}_k$. Then, $\mathcal{T}_{k+1}$ is a tableau of which all the open branches containing $@_j @_b \varphi$ satisfy all the conditions of Definition \ref{defsat} with respect to it. Thus, so is $\mathcal{T}^*$, but that is a contradiction.
\end{proof}

Next, we define the \emph{right nominal}. Given a branch $\Theta$ of tableau, $s \in \textbf{Nom}$ is a right nominal of $\Theta$ if there is a nominal $t$ such that $@_t s \in \Theta$.

Right nominals have the following property.

\begin{lem}
  All the right nominals occurring in branch $\Theta$ occur in the root formula.
\end{lem}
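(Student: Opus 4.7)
My plan is to prove the lemma by induction on the construction of $\Theta$, with the quasi-subformula property (Lemma~\ref{lemquasi}) doing the real work at the critical step. To make the induction close, I would strengthen the statement slightly: I claim that every formula in $\Theta$ of the form $@_t s$ \emph{or} $@_t \neg s$ with $s \in \Nom$ has its underlying nominal $s$ occurring in the root formula. The lemma itself is the restriction of this strengthening to the case of a bare nominal on the right. This strengthening is forced by the rule $[\neg_1]$, whose conclusion $@_j j$ introduces a right-nominal occurrence of $j$ out of a premise of shape $@_i \neg j$, so the induction hypothesis has to already speak about negated nominals on the right.

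The base case is vacuous: the root $@_i @_a \neg \varphi$ is doubly prefixed, so its right-hand side $@_a \neg \varphi$ is neither a nominal nor a negated nominal. For the inductive step I would inspect each rule in Figure~\ref{figrules}. Most rules produce a conclusion of the form $@_i @_a \chi$ or a conclusion whose right-hand side is $\diamondsuit_k \chi$, and therefore cannot introduce a formula of the target shape at all. The rules that potentially can are precisely $[Red_1]$, $[Red_2]$, $[\neg_1]$, $[\neg_2]$, $[Id'_1]$, and $[Id'_2]$. The $[Red_k]$ cases are handled directly by Lemma~\ref{lemquasi}: the premise $@_i @_a s$ is a quasi-subformula of the root, which forces the underlying nominal of $s$ to sit inside $\neg \varphi$, i.e.\ to occur in the root. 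The $[\neg_k]$ and $[Id'_k]$ cases reduce immediately to the strengthened induction hypothesis applied to the relevant single-$@$ premise (respectively $@_i \neg j$ and $@_i s$).

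The point I expect to be the main obstacle — or rather the one place a careless argument would leave a gap — is verifying that fresh nominals introduced by $[\diamondsuit_1]$ and $[\diamondsuit_2]$ never silently become right nominals at some later stage. A fresh $j$ appears initially only inside a $\diamondsuit_k$ (in $@_i \diamondsuit_1 j$) or as the left subscript of an $@$ (in $@_j @_a \varphi$), so for $j$ to end up on the right of an $@$ some later rule application would have to produce $@_t j$ as a conclusion; by the case analysis above, any such production would force $j$ to occur in the root, contradicting the freshness side condition of the $[\diamondsuit_k]$ rule. With this sanity check the induction closes and the lemma follows.
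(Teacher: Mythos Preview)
Your proof is correct and is essentially the paper's own argument: the same strengthening to include $@_t \neg s$, the same induction on the construction of the branch, and the same case split over $[Red_k]$, $[\neg_k]$, $[Id'_k]$; you are simply more explicit than the paper in invoking Lemma~\ref{lemquasi} for the $[Red_k]$ case. Your final paragraph on fresh nominals is unnecessary --- the rule-by-rule case analysis already accounts for every way a single-$@$ formula can enter the branch --- but it does no harm.
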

\begin{proof}
  We prove the following proposition by induction on $n$, which straightforwardly derives the desired lemma:
  \begin{quotation}
    For any nominal $s \in \Nom$, $n \in \mathbb{N}$, and a fragment of branch $\Theta^n$, if $@_t s \in \Theta^n$ or $@_t \neg s \in \Theta^n$, then $s$ occurs in $\Theta$.
  \end{quotation}
  Since $@_t s$ and $@_t \neg s$ can be derived using $[Id'_1]$, $[Red_1]$, or $[\neg_1]$, then we have to check only these three cases. We assume that $s, t \in \Nom_1$ (the case in which $s, t \in \Nom_2$ is the same).
  \begin{enumerate}[i)]
    \item Suppose that $@_t s$ is derived using $[Id'_1]$ (we can do it in the same way for $@_t \neg s$). Then, there is a nominal $u \in \Nom_1$ such that $@_u s \in \Theta^{n-1}$ and $@_u t \in \Theta^{n-1}$. \label{right1}
    \item Suppose that $@_t s$ is derived using $[Red_1]$ (we can do it in the same way for $@_t \neg s$). Then, there is a nominal $a \in \Nom_2$ such that $@_t @_a s \in \Theta^{n-1}$. \label{right2}
    \item If $@_t s$ is derived using $[\neg_1]$, then $t = s$, and there is a nominal $u \in \Nom_1$ such that $@_u \neg s \in \Theta^{n-1}$. \label{right3}
  \end{enumerate}
\end{proof}

\begin{dfn}
  Let $\Theta$ be a branch of a tableau. For any right nominals $i, j \in \Nom_1$ and $a, b \in \Nom_2$ occurring in $\Theta$, we define the binary relations $\sim_{\Theta}^1$ and $\sim_{\Theta}^2$ as follows:
  \begin{align*}
    i \sim_{\Theta}^1 j \quad &\text{iff} \quad @_i j \in \Theta,  \\
    a \sim_{\Theta}^2 b \quad &\text{iff} \quad @_a b \in \Theta.
  \end{align*}
  \label{defsim}
\end{dfn}

\begin{lem}
 Let $\Theta$ be a saturated branch of a tableau. Then, $\sim_{\Theta}^k (k = 1, 2)$ of Definition \ref{defsim} are equivalence relations of $\Nom_1$ and $\Nom_2$, respectively.
  \label{lemsim}
\end{lem}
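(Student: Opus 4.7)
The plan is to prove reflexivity, symmetry, and transitivity of $\sim_\Theta^1$; the argument for $\sim_\Theta^2$ is entirely analogous, using saturation clause (\ref{satid22}) in place of (\ref{satid21}). The workhorse throughout is clause (\ref{satid21}), which says that whenever $@_\alpha s$ and $@_\alpha \beta$ both lie in $\Theta$, with $s$ a (possibly negated) nominal of $\Nom_1$, then $@_\beta s \in \Theta$.

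For reflexivity, I would fix a right nominal $i \in \Nom_1$. By the definition of right nominal, there is some nominal $u$ with $@_u i \in \Theta$. Applying (\ref{satid21}) with the role of $s$ played by the nominal $i$ itself, taking both the ``$@_\alpha s$'' and ``$@_\alpha \beta$'' premises to be $@_u i$, yields $@_i i \in \Theta$, so $i \sim_\Theta^1 i$.

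For symmetry, suppose $i \sim_\Theta^1 j$, i.e., $@_i j \in \Theta$; the reflexivity step just established gives $@_i i \in \Theta$, and applying (\ref{satid21}) with $s := i$ to the pair $@_i i, @_i j$ produces $@_j i \in \Theta$, so $j \sim_\Theta^1 i$. For transitivity, suppose $@_i j, @_j k \in \Theta$; symmetry already provides $@_j i \in \Theta$, and applying (\ref{satid21}) with $s := k$ to the pair $@_j k, @_j i$ delivers $@_i k \in \Theta$, so $i \sim_\Theta^1 k$.

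The only real subtlety, and hence the main (mild) obstacle, is reflexivity: the saturation clause cannot manufacture a formula from nothing, so I must use the right-nominal hypothesis to supply an initial $@_u i$ that can be fed into (\ref{satid21}) to derive $@_i i$. Once reflexivity is in place, symmetry and transitivity are each a single invocation of the same clause with the appropriate instantiation, so the remainder of the proof is routine.
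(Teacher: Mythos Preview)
Your proof is correct and follows essentially the same approach as the paper's own proof: both establish reflexivity from the right-nominal hypothesis via clause (\ref{satid21}), then derive symmetry using reflexivity plus (\ref{satid21}), and finally transitivity using symmetry plus (\ref{satid21}). Your exposition is slightly more explicit about the instantiations and about why the right-nominal assumption is needed, but the argument is the same.
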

\begin{proof}
  It suffices to show that $\sim_{\Theta}^1$ is reflexive, symmetric, and transitive.
  
  First, we show reflexivity. Let $i$ be a right nominal. Then, there is a nominal $j$ such that $@_j i \in \Theta$. Since $\Theta$ is saturated, by Definition \ref{defsat}(\ref{satid21}) we have $@_i i \in \Theta$. Therefore, $i \sim_{\Theta}^1 i$.

  Second, we show symmetry. Let $i \sim_{\Theta}^1 j$. By definition, $@_i j \in \Theta$ holds. Also, by reflexivity of $\sim_{\Theta}^1$, we have $@_i i \in \Theta$. Since $\Theta$ is saturated, by Definition \ref{defsat}(\ref{satid21}) we have $@_j i \in \Theta$. Therefore, $j \sim_{\Theta}^1 i$.

  Lastly, we show transitivity. Let $i \sim_{\Theta}^1 j$ and $j \sim_{\Theta}^1 k$. By definition, $@_i j \in \Theta$ and $@_j k \in \Theta$ hold. We have $@_j i \in \Theta$ by the symmetry of $\sim_{\Theta}^1$, and since $\Theta$ is saturated, by Definition \ref{defsat}(\ref{satid21}) we have $@_i k \in \Theta$. Therefore, we have $i \sim_{\Theta}^1 k$.
\end{proof}

\begin{dfn}
  Let $\Theta$ be a saturated branch of a tableau, and let $i \in \Nom_1$ and $a \in \Nom_2$ be right nominals occurring in $\Theta$. We denote by $[i]_{\Theta}^1$ an equivalence class of a right nominal $i$ by the relation $\sim_{\Theta}^1$. Similarly, we denote by $[a]_{\Theta}^2$ an equivalence class of a right nominal $a$ by the relation $\sim_{\Theta}^2$.
\end{dfn}

Note that since $\Nom_1$ and $\Nom_2$ are both countably infinite, we can insert well-order in each of them. For instance, Bolander and Blackburn (2007) \cite{bolander2007} established an order that depends on when it was introduced\footnote{They did not mention the case that two or more nominals are introduced at the same formula (it will happen when we put root formula on.) In this case, however, we can define an easy and natural order: the more left, the earlier.}. For a non-empty set $A$ of nominals, we write $\min(A)$ as the minimal element in this order. We use this notation $\min(A)$ in Definition \ref{defur}.

\begin{dfn}
  Let $\Theta$ be a saturated branch of a tableau and $s$ a nominal occurring in $\Theta$ (note that $s$ does not have to be a right nominal). We define an \emph{urfather} of $s$ on $\Theta$ (denoted by $u_{\Theta}(s)$) as follows:
  \[
    u_{\Theta}(s) =
    \begin{cases}
      \min([j]_\Theta^1) & \text{if} \quad s \in \Nom_1 \ \text{and} \ @_s j \in \Theta \\
      \min([b]_\Theta^2) & \text{if} \quad s \in \Nom_2 \ \text{and} \ @_s b \in \Theta \\
      s & \text{otherwise.}
    \end{cases}
  \]
  \label{defur}
\end{dfn}

\begin{prop}
  $u_{\Theta}(s)$ of Definition \ref{defur} is well-defined.
\end{prop}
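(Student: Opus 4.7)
The plan is to verify the two respects in which the definition could fail to be single-valued: (i) in the first case the witness $j$ is not uniquely determined by $s$, and similarly for $b$ in the second case; and (ii) the $\min$ operation must be meaningful on the sets to which it is applied. The third, default clause is trivially unambiguous, and since $\Nom_1 \cap \Nom_2 = \emptyset$ by the vocabulary of HPL, the three cases are mutually disjoint and exhaust all possibilities for $s \in \Nom$.

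For (ii), note that whenever $@_s j \in \Theta$ the nominal $j$ is itself a right nominal of $\Theta$ (with $s$ as its witness). Hence the reflexivity established in Lemma \ref{lemsim} gives $@_j j \in \Theta$, i.e.\ $j \in [j]_\Theta^1$, so the equivalence class is non-empty; and since $\Nom_1$ carries a well-order, $\min([j]_\Theta^1)$ exists. The $\Nom_2$ case is the same modulo swapping $1$ for $2$.

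For (i), which is the substantive step, suppose $s \in \Nom_1$ and that both $@_s j, @_s j' \in \Theta$ with $j, j' \in \Nom_1$. I would invoke clause \ref{satid21} of Definition \ref{defsat}, instantiating the clause's $i$ as $s$, its $s$ as $j'$, and its $j$ as $j$: from $@_s j' \in \Theta$ and $@_s j \in \Theta$ the clause yields $@_j j' \in \Theta$, hence $j \sim_\Theta^1 j'$, and so $[j]_\Theta^1 = [j']_\Theta^1$. The two candidate outputs $\min([j]_\Theta^1)$ and $\min([j']_\Theta^1)$ therefore coincide. The parallel argument for $s \in \Nom_2$ uses clause \ref{satid22} verbatim. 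I do not expect a genuine obstacle here; the argument is bookkeeping, and the only mild subtlety is realizing that any witness $j$ appearing in $@_s j \in \Theta$ is automatically a right nominal, which is exactly what legitimizes writing $[j]_\Theta^1$ in the first place.
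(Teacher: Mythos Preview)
Your proof is correct and follows essentially the same approach as the paper: the core step in both is to invoke clause~\ref{satid21} of Definition~\ref{defsat} to show that any two witnesses $j,j'$ with $@_s j, @_s j' \in \Theta$ lie in the same $\sim_\Theta^1$-class, whence the two candidate minima coincide. You are more thorough than the paper in also checking that $\min$ is meaningful (non-emptiness via right-nominal status, existence via the well-order) and that the three clauses are mutually exclusive, but these are routine points the paper leaves implicit.
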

\begin{proof}
  Assume that $@_s i, @_s j \in \Theta$ and $s, i, j \in \Nom_1$ (we can do the case $\Nom_2$ in the same way). Since $\Theta$ is saturated, by Definition \ref{defsat}(\ref{satid21}) we have $@_i j \in \Theta$. Thus, $[i]_{\Theta}^1 = [j]_{\Theta}^1$, and we have $\min([i]_{\Theta}^1) = \min([j]_{\Theta}^1)$. Therefore, $u_{\Theta}(s)$ is unique.
\end{proof}

\begin{lem}
  Let $\Theta$ be a saturated branch of a tableau. Then, we have the following properties: 
  \begin{enumerate}[i)]
    \item If $s \in \Nom$ is a right nominal occuring in $\Theta$, then $@_{u_\Theta(s)} s \in \Theta$. \label{lemurright}
    \item For all $s, t \in \Nom$, if $@_s t \in \Theta$, then $u_\Theta(s) = u_\Theta(t)$. \label{lemureq}
    \item For all quasi-subformulae $@_i @_a \varphi \in \Theta$ of the root formula in $\Theta$, we have $@_{u_\Theta(i)} @_{u_\Theta(a)} \varphi \in \Theta$. \label{lemurcl}
  \end{enumerate}
  \label{lemurpro}  
\end{lem}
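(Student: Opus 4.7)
The plan is to handle the three parts by direct appeal to saturation and to the equivalence-class structure established in Lemma \ref{lemsim}; throughout I argue only the $\Nom_1$ side, as the $\Nom_2$ side is verbatim symmetric.

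For (\ref{lemurright}), the reflexivity of $\sim_\Theta^1$ proved in Lemma \ref{lemsim} tells us that any right nominal $s \in \Nom_1$ satisfies $@_s s \in \Theta$, so the first clause of Definition \ref{defur} applies with witness $j = s$, giving $u_\Theta(s) = \min([s]_\Theta^1)$. This minimum lies in $[s]_\Theta^1$, so $u_\Theta(s) \sim_\Theta^1 s$, which unfolds by Definition \ref{defsim} to $@_{u_\Theta(s)} s \in \Theta$. For (\ref{lemureq}), inspection of the rules in Figure \ref{figrules} shows that every derivable formula of the form $@_s t$ with $s, t \in \Nom$ has $s$ and $t$ in the same dimension, so I may assume $s, t \in \Nom_1$. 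The hypothesis $@_s t \in \Theta$ then reads as $s \sim_\Theta^1 t$; by symmetry $@_t s \in \Theta$ as well, so the first clause of Definition \ref{defur} applies to both sides and yields $u_\Theta(s) = \min([t]_\Theta^1)$ and $u_\Theta(t) = \min([s]_\Theta^1)$. Since $[s]_\Theta^1 = [t]_\Theta^1$, equality is immediate.

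For (\ref{lemurcl}), the plan is to rewrite the two ``labels'' $i$ and $a$ separately using saturation conditions (\ref{satid11}) and (\ref{satid12}). If $u_\Theta(i) = i$, no rewriting is needed on the first coordinate. Otherwise the first clause of Definition \ref{defur} must be in force, so there exists some $j \in \Nom_1$ with $@_i j \in \Theta$. Combined with $u_\Theta(i) = \min([j]_\Theta^1) \in [j]_\Theta^1 = [i]_\Theta^1$ (using $i \sim_\Theta^1 j$), the equivalence-relation laws of Lemma \ref{lemsim} give $i \sim_\Theta^1 u_\Theta(i)$, i.e., $@_i u_\Theta(i) \in \Theta$. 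Applying saturation condition (\ref{satid11}) to $@_i @_a \varphi$ and $@_i u_\Theta(i)$ then produces $@_{u_\Theta(i)} @_a \varphi \in \Theta$; running the same argument on the second coordinate using (\ref{satid12}) replaces $a$ by $u_\Theta(a)$ and finishes the proof.

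The delicate point I expect is the middle step of the previous paragraph: one must check that the witness $@_i u_\Theta(i) \in \Theta$ that feeds saturation (\ref{satid11}) is actually present in $\Theta$, not merely semantically licensed. The argument reroutes through the fact that $\min([j]_\Theta^1)$, being an element of the equivalence class $[j]_\Theta^1 = [i]_\Theta^1$, is related to $i$ under $\sim_\Theta^1$, so the needed formula lies in $\Theta$ by the very definition of $\sim_\Theta^1$. Everything else is uniform in the two dimensions and uses no properties of the branch beyond saturation.
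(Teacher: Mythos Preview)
Your argument for (\ref{lemurright}) matches the paper's and is fine. The gap is in (\ref{lemureq}) and (\ref{lemurcl}): you tacitly assume that the \emph{left} nominal in a formula $@_s t$ is itself a right nominal, but nothing in the setup guarantees this. The relation $\sim_\Theta^1$ of Definition~\ref{defsim} and the equivalence laws of Lemma~\ref{lemsim} are only defined and proved on right nominals, so writing ``$s \sim_\Theta^1 t$'' or ``$[i]_\Theta^1$'' when $s$ or $i$ is merely a left label is not licensed. Concretely, from a root like $@_{i_0}@_{a_0}(@_k l \land @_l m \land @_j m)$ one obtains a saturated open branch containing $@_j m$, $@_l m$, $@_m l$, with $[m]_\Theta^1=\{l,m\}$ and $u_\Theta(j)=\min\{l,m\}=l$, yet $@_j l\notin\Theta$ and $@_m j\notin\Theta$. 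So in (\ref{lemureq}) your ``by symmetry $@_t s\in\Theta$'' fails, and in (\ref{lemurcl}) your key witness $@_i u_\Theta(i)\in\Theta$ need not exist.

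The paper sidesteps this by never treating the left label as a member of the equivalence class. For (\ref{lemureq}) it reads off $u_\Theta(s)=\min([t]_\Theta^1)$ directly from $@_s t\in\Theta$ via Definition~\ref{defur}, and computes $u_\Theta(t)=\min([t]_\Theta^1)$ from part (\ref{lemurright}) since $t$ \emph{is} a right nominal; equality is immediate. For (\ref{lemurcl}) it makes two hops instead of one: first use $@_i j\in\Theta$ with saturation (\ref{satid11}) to pass from $@_i@_a\varphi$ to $@_j@_a\varphi$, and only then use $@_j u_\Theta(i)\in\Theta$---which \emph{is} available because $j$ is a right nominal and $u_\Theta(i)=\min([j]_\Theta^1)\in[j]_\Theta^1$---to reach $@_{u_\Theta(i)}@_a\varphi$. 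Your ``delicate point'' paragraph anticipates exactly the right worry but resolves it in the wrong direction: rerouting through $[i]_\Theta^1$ is the step that fails, whereas rerouting through the witness $j$ is what works.
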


\begin{proof}
  \begin{enumerate}[i)]
    \item If $s$ is a right nominal, then by Lemma \ref{lemsim} $@_s s \in \Theta$ holds. Thus, by definition we have $u_{\Theta}(s) = \min([s]_{\Theta}^k) \ (k = 1, 2)$. From this, we have $u_\Theta(s) \sim_\Theta^k s$. Therefore, $@_{u_\Theta(s)} s \in \Theta$.
    \item First, by $@_s t \in \Theta$, we have $u_{\Theta}(s) = \min([t]_{\Theta}^k) \ (k = 1, 2)$. Also, since $t$ is a right nominal, $u_{\Theta}(t) = \min([t]_{\Theta}^k)$ holds by doing the same as in (\ref{lemurright}). Therefore, $u_{\Theta}(s) = u_{\Theta}(t)$.
    \item Suppose that $@_i @_a \varphi \in \Theta$. Then, we have the following four cases:
    \begin{enumerate}[a)]
      \item $u_{\Theta}(i) = i, u_{\Theta}(a) = a$. \label{urcl1}
      \item $u_{\Theta}(i) = i, u_{\Theta}(a) = \min([b]_\Theta^2)$. \label{urcl2}
      \item $u_{\Theta}(i) = \min([j]_\Theta^1), u_{\Theta}(a) = a$. \label{urcl3}
      \item $u_{\Theta}(i) = \min([j]_\Theta^1), u_{\Theta}(a) = \min([b]_\Theta^2)$. \label{urcl4}
    \end{enumerate}
    Case (\ref{urcl1}) is trivial. In case (\ref{urcl4}), by definition we have $@_i j, @_a b\in \Theta$. Since $\Theta$ is saturated, by Definition \ref{defsat} (\ref{satid11}) and (\ref{satid12}), we have $@_j @_b \varphi \in \Theta$. Also, because $u_{\Theta}(i) = \min([j]_\Theta^1)$ and $u_{\Theta}(a) = \min([b]_\Theta^2)$, $j \sim_\Theta^1 u_\Theta(i)$ and $b \sim_{\Theta}^2 u_{\Theta}(a)$. Hence, $@_j u_\Theta(i), @_b u_{\Theta}(a) \in \Theta$ holds. By those facts, using the saturation of $\Theta$ and Definition \ref{defsat}(\ref{satid11}), (\ref{satid12}), we have $@_{u_{\Theta}(i)} @_{u_{\Theta}(a)} \varphi \in \Theta$. We can do the same in cases (\ref{urcl2}) and (\ref{urcl3}).
  \end{enumerate}
\end{proof}

To prove completeness, we construct a model from a tableau. In this paper, we take only urfathers of nominals occurring in a branch as possible worlds, which differs from the method in \cite{bolander2007}. Note that taking all nominals as possible worlds also works.

\begin{dfn}
  Given an open branch $\Theta$ of a tableau, the product model $\mathfrak{M}^{\Theta} = (W^{\Theta}, R_h^{\Theta}, R_v^\Theta, V^{\Theta})$ is defined as follows:
  \begin{align*}
    W_1^{\Theta} &= \{ u_\Theta(i) \in \Nom_1 \mid i \ \text{occurs in} \ \Theta \} , \\
    W_2^{\Theta} &= \{ u_\Theta(a) \in \Nom_2 \mid a \ \text{occurs in} \ \Theta \} , \\
    W^{\Theta} &= W_1^\Theta \times W_2^\Theta , \\
    R_h^{\Theta} &= \{ ((u_{\Theta}(i), u_{\Theta}(a)), (u_{\Theta}(j), u_{\Theta}(a))) \mid @_i \diamondsuit_1 j \in \Theta \} , \\
    R_v^{\Theta} &= \{ ((u_{\Theta}(i), u_{\Theta}(a)), (u_{\Theta}(i), u_{\Theta}(b))) \mid @_a \diamondsuit_2 b \in \Theta \} , \\
    V^{\Theta}(p) &= \{ (u_\Theta(i), u_\Theta(a)) \mid @_i @_a p \in \Theta \} , \quad \text{where} \quad p \in \Prop , \\
    V^{\Theta}(i) &= \{ (u_{\Theta}(i) \} \times W_2 , \quad \text{where} \quad i \in \Nom_1 , \\
    V^{\Theta}(a) &= W_1 \times \{ u_{\Theta}(a) \} ,  \quad \text{where} \quad a \in \Nom_2 .
  \end{align*}
\end{dfn}

\begin{lem}[model existence theorem]
  Let $\Theta$ be an open saturated branch of a tableau. Then, we have
    \[
      \text{if } @_i @_a \varphi \in \Theta, \text{ then } \mathfrak{M}^{\Theta}, (u_\Theta(i), u_{\Theta}(a)) \models \varphi.
    \]
  \label{lemmodelexist}
\end{lem}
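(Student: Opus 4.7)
The plan is to prove the lemma by induction on the structure of $\varphi$, handling each formula shape alongside its negation so that sub-cases like $@_i @_a \neg \diamondsuit_k \psi$ can feed the induction hypothesis through the subformula $\psi$. The uniform first move in every case is to apply Lemma \ref{lemurpro}(\ref{lemurcl}) to rewrite $@_i @_a \varphi \in \Theta$ as $@_{u_\Theta(i)} @_{u_\Theta(a)} \varphi \in \Theta$; this aligns the syntactic premise with the semantic point $(u_\Theta(i), u_\Theta(a)) \in W^\Theta$ at which we need to satisfy $\varphi$.

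For the propositional atom $\varphi = p$ the positive direction is immediate from the definition of $V^\Theta(p)$, while the negative direction uses openness: a hypothetical satisfier of $p$ at $(u_\Theta(i), u_\Theta(a))$ would produce some $@_{i'} @_{a'} p \in \Theta$ whose Lemma \ref{lemurpro}(\ref{lemurcl})-shift collides with $@_{u_\Theta(i)} @_{u_\Theta(a)} \neg p$. For the nominal atom $\varphi = j \in \Nom_1$, saturation (\ref{satred1}) extracts $@_i j \in \Theta$ and Lemma \ref{lemurpro}(\ref{lemureq}) gives $u_\Theta(i) = u_\Theta(j) = j^{V^\Theta}$; the negated case $\varphi = \neg j$ combines (\ref{satred1}), (\ref{satneg1}) and openness, since any equality $u_\Theta(i) = u_\Theta(j)$ would force both $@_{u_\Theta(i)} j$ and $@_{u_\Theta(i)} \neg j$ into $\Theta$. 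The Boolean connectives and the four satisfaction-operator cases are routine, applying (\ref{satnegneg})--(\ref{satnegand}) and (\ref{satat1})--(\ref{satnegat2}) directly together with the induction hypothesis.

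The substantive obstacle is the diamond cases. The positive case $\varphi = \diamondsuit_1 \psi$ is clean: saturation (\ref{satdia1}) supplies $j$ with $@_i \diamondsuit_1 j, @_j @_a \psi \in \Theta$, which by the definition of $R_h^\Theta$ furnishes the edge $(u_\Theta(i), u_\Theta(a)) R_h^\Theta (u_\Theta(j), u_\Theta(a))$, after which the induction hypothesis on $@_j @_a \psi$ finishes it. The delicate case is $\varphi = \neg \diamondsuit_1 \psi$. Assuming for contradiction some edge $(u_\Theta(i), u_\Theta(a)) R_h^\Theta (x', u_\Theta(a))$ with $\psi$ satisfied at the target, unfolding the definition of $R_h^\Theta$ supplies $i', j'$ with $@_{i'} \diamondsuit_1 j' \in \Theta$, $u_\Theta(i') = u_\Theta(i)$ and $u_\Theta(j') = x'$. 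The obstruction is that $i'$ need not literally equal $u_\Theta(i)$, so saturation (\ref{satnegdia1}) does not fire directly against the Lemma \ref{lemurpro}(\ref{lemurcl})-shifted formula $@_{u_\Theta(i)} @_{u_\Theta(a)} \neg \diamondsuit_1 \psi$. The plan for closing this is to trace $@_{i'} \diamondsuit_1 j'$ back to the double-$@$ premise $@_{i'} @_{a_1} \diamondsuit_1 \psi_1 \in \Theta$ from which $[\diamondsuit_1]$ derived it (also producing $@_{j'} @_{a_1} \psi_1 \in \Theta$), shift it via Lemma \ref{lemurpro}(\ref{lemurcl}) onto $u_\Theta(i)$, and then use saturation (\ref{satdia1}) together with (\ref{satid11}) along the equivalence chain relating $i'$ to $u_\Theta(i)$ to install a canonical witness $j^*$ with $@_{u_\Theta(i)} \diamondsuit_1 j^* \in \Theta$ and $u_\Theta(j^*) = x'$. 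Saturation (\ref{satnegdia1}) then yields $@_{j^*} @_{u_\Theta(a)} \neg \psi \in \Theta$, and the induction hypothesis on $\psi$ contradicts the assumed satisfaction at $(x', u_\Theta(a))$. The $\diamondsuit_2$ and $\neg \diamondsuit_2$ cases follow by symmetry.
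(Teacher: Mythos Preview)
Your overall induction matches the paper's, and the atomic, nominal, Boolean, and $@$-cases are fine (indeed your $\neg p$ and $\neg j$ arguments are slightly more careful than the paper's). You are also right to flag the $\neg\diamondsuit_1$ case as the delicate one: unfolding $R_h^\Theta$ at $(u_\Theta(i),u_\Theta(a))$ only gives $@_{i'}\diamondsuit_1 j'\in\Theta$ with $u_\Theta(i')=u_\Theta(i)$, and the paper simply asserts that one may take $i'=i$.

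However, your proposed repair does not work. Tracing $@_{i'}\diamondsuit_1 j'$ back to its source $@_{i'}@_{a_1}\diamondsuit_1\psi_1$, shifting to $@_{u_\Theta(i)}@_{u_\Theta(a_1)}\diamondsuit_1\psi_1$, and then invoking saturation (\ref{satdia1}) produces some $j^*$ with $@_{u_\Theta(i)}\diamondsuit_1 j^*\in\Theta$ --- but this $j^*$ is the fresh nominal generated by applying $[\diamondsuit_1]$ to a \emph{different} formula than the one that produced $j'$. Nothing in the calculus links $j^*$ and $j'$; in particular there is no reason for $u_\Theta(j^*)=u_\Theta(j')=x'$. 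Your claimed conclusion ``$u_\Theta(j^*)=x'$'' is unsupported, and without it (\ref{satnegdia1}) tells you nothing about the successor you started with.

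The correct move is to shift the $\neg\diamondsuit_1$ formula toward $i'$, not to manufacture a new diamond witness. From $@_i@_a\neg\diamondsuit_1\psi$ and Lemma~\ref{lemurpro}(\ref{lemurcl}) you get $@_{u_\Theta(i)}@_{u_\Theta(a)}\neg\diamondsuit_1\psi\in\Theta$. Now either $i'$ is not a right nominal, in which case $u_\Theta(i')=i'$ forces $i'=u_\Theta(i)$ and $@_{i'}\diamondsuit_1 j'$ already has the needed left label; or $i'$ is a right nominal, and Lemma~\ref{lemurpro}(\ref{lemurright}) gives $@_{u_\Theta(i')}i'=@_{u_\Theta(i)}i'\in\Theta$, whence (\ref{satid11}) yields $@_{i'}@_{u_\Theta(a)}\neg\diamondsuit_1\psi\in\Theta$. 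In either case (\ref{satnegdia1}) now fires against $@_{i'}\diamondsuit_1 j'$ to give $@_{j'}@_{u_\Theta(a)}\neg\psi\in\Theta$, and the induction hypothesis at $(u_\Theta(j'),u_\Theta(a))=(x',u_\Theta(a))$ finishes the case.
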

\begin{proof}
  By induction on the complexity of $\varphi$.
  
  \begin{description}
    \item[{$[\varphi = p]$}] Suppose that $@_i @_a p \in \Theta$. Then, by the definition of $V^{\Theta}$, $ (u_{\Theta}(i), u_{\Theta}(a)) \in V^{\Theta}(p)$, from which it follows that $\mathfrak{M}^{\Theta}, (u_\Theta(i), u_{\Theta}(a)) \models p$.

    \item[{$[\varphi = \neg p]$}] Suppose that $@_i @_a \neg p \in \Theta$. Since $\Theta$ is open, $@_i @_a p \notin \Theta$. Then, by the definition of $V^{\Theta}$, $(u_\Theta(i), u_{\Theta}(a)) \notin V^{\Theta}(p)$ holds. Therefore, $\mathfrak{M}^{\Theta}, (u_\Theta(i), u_{\Theta}(a)) \models \neg p$.

    \item[{$[\varphi = j]$}] Suppose that $@_i @_a j \in \Theta \ (j \in \Nom_1)$. Since $\Theta$ is saturated, by Definition \ref{defsat} (\ref{satred1}) we have $@_i j \in \Theta$. Then, by Lemma \ref{lemurpro} (\ref{lemureq}), $u_{\Theta}(i) = u_{\Theta}(j)$ holds. Thus, $(u_{\Theta}(i), u_\Theta(a)) \in V^{\Theta}(j)$. Therefore, $\mathfrak{M}^{\Theta}, (u_\Theta(i), u_{\Theta}(a)) \models j$. We can do the same in the case $\varphi = b \in \Nom_2$.

    \item[{$[\varphi = \neg j]$}] Suppose that $@_i @_a \neg j \in \Theta \ (j \in \Nom_1)$. By Lemma \ref{lemurpro} (\ref{lemurcl}) we have $@_{u_\Theta(i)} @_{u_\Theta(a)} \neg j \in \Theta$. Since $\Theta$ is saturated, by Definition \ref{defsat} (\ref{satred1}) and (\ref{satneg1}), $@_{u_{\Theta}(i)} \neg j, @_j j \in \Theta$. From the latter, $j$ is a right nominal. Thus, by Lemma \ref{lemurpro} (\ref{lemurright}) we have $@_{u_{\Theta}(j)} j \in \Theta$. Hence, $@_{u_{\Theta}(i)} \neg j, @_{u_{\Theta}(j)} j \in \Theta$. Since $\Theta$ is open, $u_{\Theta}(i) \neq u_{\Theta}(j)$. Then, by the definition of $V^{\Theta}$ it follows that $(u_\Theta(i), u_{\Theta}(a)) \notin V^{\Theta}(j)$. Therefore, $\mathfrak{M}^{\Theta}, (u_\Theta(i), u_{\Theta}(a)) \models \neg j$. We can do the same in the case $\varphi = \neg b \ (b \in \Nom_2)$.

    \item[{$[\varphi = (\psi_1 \land \psi_2)]$}] Suppose that $@_i @_a (\psi_1 \land \psi_2) \in \Theta$. Since $\Theta$ is saturated, by Definition \ref{defsat} (\ref{satand}) we have $@_i @_a \psi_1, @_i @_a \psi_2 \in \Theta$. Thus, by the induction hypothesis, both $\mathfrak{M}^{\Theta}, (u_\Theta(i), u_{\Theta}(a)) \models \psi_1$ and $\mathfrak{M}^{\Theta}, (u_\Theta(i), u_{\Theta}(a)) \models \psi_2$ hold. Therefore, by the definition of $\models$, we have $\mathfrak{M}, (u_\Theta(i), u_{\Theta}(a)) \models \psi_1 \land \psi_2$.

    \item[{$[\varphi = \neg (\psi_1 \land \psi_2)]$}] Suppose that $@_i @_a \neg (\psi_1 \land \psi_2) \in \Theta$. Since $\Theta$ is saturated, by Definition \ref{defsat} (\ref{satnegand}) we have either $@_i @_a \neg \psi_1 \in \Theta$ or $@_i @_a \neg \psi_2 \in \Theta$. Then, by the induction hypothesis, either  $\mathfrak{M}^{\Theta}, (u_\Theta(i), u_{\Theta}(a)) \models \neg \psi_1$ or $\mathfrak{M}^{\Theta}, (u_\Theta(i), u_{\Theta}(a)) \models \neg \psi_2$ holds. Therefore, $\mathfrak{M}, (u_\Theta(i), u_{\Theta}(a)) \models \neg (\psi_1 \land \psi_2)$.

    \item[{$[\varphi = \diamondsuit_1 \psi]$}] Suppose that $@_i @_a \diamondsuit_1 \psi \in \Theta$. Since $\Theta$ is saturated, by Definition \ref{defsat} (\ref{satdia1}) there is a nominal $j \in \Nom_1$ such that $@_i \diamondsuit_1 j, @_j @_a \psi \in \Theta$. From the former and the definition of $R_h^{\Theta}$, we have $(u_{\Theta}(i), u_{\Theta}(a)) R_h^\Theta (u_{\Theta}(j), u_\Theta(a))$, and by applying the induction hypothesis to the latter, we have $\mathfrak{M}^{\Theta}, (u_\Theta(j), u_{\Theta}(a)) \models \psi$. Therefore, $\mathfrak{M}^{\Theta}, (u_\Theta(i), u_{\Theta}(a)) \models \diamondsuit_1 \psi$. We can do the same in the case $\varphi = \diamondsuit_2 \psi$.

    \item[{$[\varphi = \neg \diamondsuit_1 \psi]$}] Suppose that $@_i @_a \neg \diamondsuit_1 \psi \in \Theta$. Now, assume that there is a nominal $j \in \Nom_1$ such that $(u_\Theta(i), u_\Theta(a)) R_h^\Theta (u_\Theta(j), u_\Theta(a))$ and $@_i \dia_1 j \in \Theta$ (if such a $j$ does not exist, in the model $\mathfrak{M}^\Theta$ there is no possible world that is reachable from $(u_\Theta(i), u_\Theta(a))$ through $R_h^\Theta$). Thus, $\mathfrak{M}^{\Theta}, (u_\Theta(i), u_{\Theta}(a)) \models \neg \diamondsuit_1 \psi$.) If we take such a $j$, since $\Theta$ is saturated, by Definition \ref{defsat} (\ref{satnegdia1}) we have $@_j @_a \neg \psi \in \Theta$. Hence, by the induction hypothesis, $\mathfrak{M}^{\Theta}, (u_\Theta(j), u_\Theta(a)) \models \neg \psi$. This holds for all $j$, so we have $\mathfrak{M}^{\Theta}, (u_\Theta(i), u_\Theta(a)) \models \square_1 \neg \psi$. Therefore, $\mathfrak{M}^{\Theta}, (u_\Theta(i), u_\Theta(a)) \models \neg \diamondsuit_1 \psi$. We can do the same in the case $\varphi = \neg \diamondsuit_2 \psi$.

    \item[{$[\varphi = @_j \psi]$}] Suppose that $@_i @_a @_j \psi \in \Theta \ (j \in \Nom_1)$. Since $\Theta$ is saturated, by Definition \ref{defsat} (\ref{satat1}) we have $@_j @_a \psi \in \Theta$. By the induction hypothesis, $\mathfrak{M}^\Theta, (u_\Theta(j), u_\Theta(a)) \models \psi$ holds. Therefore, $\mathfrak{M}^\Theta, (u_\Theta(i), u_\Theta(a)) \models @_j \psi$. We can do the same in the case $\varphi = @_b \psi \ (b \in \Nom_2)$.

    \item[{$[\varphi = \neg @_j \psi]$}] Suppose that $@_i @_a \neg @_j \psi \in \Theta \ (j \in \Nom_1)$. Since $\Theta$ is saturated, by Definition \ref{defsat} (\ref{satnegat1}) we have $@_j @_a \neg \psi \in \Theta$. By the induction hypothesis $\mathfrak{M}^\Theta, (u_\Theta(j), u_\Theta(a)) \models \neg \psi$ holds. Then, $\mathfrak{M}^\Theta, (u_\Theta(i), u_\Theta(a)) \models @_j \neg \psi$. Therefore, $\mathfrak{M}^\Theta, (u_\Theta(i), u_\Theta(a)) \models \neg @_j \psi$. We can do the same in the case $\varphi = \neg @_b \psi \ (b \in \Nom_2)$.
  \end{description}
\end{proof}

\begin{thm}[completeness]
  The tableau calculus of HPL is complete for the class of product frames, i.e., if $\models \varphi$, then $\vdash \varphi$.
  \label{thmcomplete}
\end{thm}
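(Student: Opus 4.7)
The plan is to prove the contrapositive: assume $\not\vdash \varphi$ and build a concrete product model in which $\varphi$ fails, thereby establishing $\not\models \varphi$. All the heavy lifting has already been done in the preceding lemmas, so the argument is essentially a matter of assembling them correctly.

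First, I would pick nominals $i \in \Nom_1$ and $a \in \Nom_2$ not occurring in $\varphi$, and apply Lemma \ref{lemconst} to construct a (possibly infinite) tableau $\mathcal{T}^{*}$ with root $@_i @_a \neg \varphi$ in which every open branch is saturated. From the hypothesis $\not\vdash \varphi$, no closed tableau with that root exists; in particular $\mathcal{T}^{*}$ is not closed, so it has at least one open branch $\Theta$. By Lemma \ref{lemconst} this $\Theta$ is saturated, and by construction the root formula $@_i @_a \neg \varphi$ lies in $\Theta$.

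Next, I would instantiate the product model $\mathfrak{M}^{\Theta}$ of the preceding definition and invoke Lemma \ref{lemmodelexist} (the model existence theorem) with the root formula itself: this gives $\mathfrak{M}^{\Theta}, (u_{\Theta}(i), u_{\Theta}(a)) \models \neg \varphi$. Hence $(u_{\Theta}(i), u_{\Theta}(a))$ is a point in the product model $\mathfrak{M}^{\Theta}$ witnessing that $\varphi$ is not valid, so $\not\models \varphi$, contradicting our assumption. This completes the contrapositive.

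The only genuinely subtle point, and the step I would double-check, is that $\mathfrak{M}^{\Theta}$ is indeed a legitimate product model in the sense of Definition \ref{defmodel}: in particular, that $W_1^{\Theta}$ and $W_2^{\Theta}$ are non-empty (guaranteed because $i$ and $a$ themselves occur in the root formula and hence in $\Theta$, so $u_{\Theta}(i) \in W_1^{\Theta}$ and $u_{\Theta}(a) \in W_2^{\Theta}$), and that the valuation clauses for $V^{\Theta}$ on $\Nom_1$ and $\Nom_2$ have the required ``vertical line / horizontal line'' shape, which they do by construction. Everything else is essentially bookkeeping on top of Lemmas \ref{lemconst} and \ref{lemmodelexist}, so the proof should be only a few lines long.
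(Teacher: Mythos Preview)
Your proposal is correct and follows essentially the same route as the paper: argue by contraposition, invoke Lemma~\ref{lemconst} to obtain a tableau with root $@_i @_a \neg \varphi$ whose open branches are saturated, note that $\not\vdash \varphi$ forces some branch $\Theta$ to be open, and apply Lemma~\ref{lemmodelexist} to get $\mathfrak{M}^{\Theta}, (u_{\Theta}(i), u_{\Theta}(a)) \models \neg \varphi$. One small wording slip: the phrase ``contradicting our assumption'' is out of place, since you are proving the contrapositive (from $\not\vdash \varphi$ to $\not\models \varphi$) rather than deriving a contradiction; simply delete that clause.
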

\begin{proof}
  We show the contraposition.

  Suppose that $\not \vdash \varphi$. If we choose two arbitrary nominals $i \in \Nom_1$ and $a \in \Nom_2$ such that $i$ and $a$ do not occur in $\varphi$, then all tableaux whose root formula is $@_i @_a \neg \varphi$ are not closed. Thus, a saturated tableau $\mathcal{T}$ constructed by the method of Lemma \ref{lemconst} is not closed either. This implies that we have an open and saturated branch $\Theta$ from $\mathcal{T}$. Then, by Lemma \ref{lemmodelexist}, we have $\mathfrak{M}^{\Theta}, (u_{\Theta}(i), u_{\Theta}(a)) \models \neg \varphi$. By the definition of $\models$, we have $\mathfrak{M}^{\Theta}, (u_{\Theta}(i), u_{\Theta}(a)) \not \models \varphi$, and this implies that there is a model in which $\varphi$ does not hold. Therefore, $\not \models \varphi$.
\end{proof}

\begin{cor}
  Given two finite sets of formulae $\Gamma$ and $\Delta$, if $\Gamma \models \Delta$, then $\Gamma \vdash \Delta$.
\end{cor}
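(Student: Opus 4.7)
The plan is to reduce the corollary to the already-established completeness theorem (Theorem \ref{thmcomplete}) by packaging the pair $(\Gamma, \Delta)$ into a single formula. Specifically, I set $\varphi := \bigwedge \Gamma \rightarrow \bigvee \Delta$. Then by the definition of semantic entailment for sets given at the end of Definition \ref{defsatis}, the statement $\Gamma \models \Delta$ unfolds verbatim to $\models \varphi$, and by the definition of $\Gamma \vdash \Delta$ in the provability definition, the statement $\Gamma \vdash \Delta$ unfolds to $\vdash \varphi$, provided the freshness conditions on the nominals in the root $@_i @_a \neg \varphi$ agree in both definitions.

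The chain of implications is then immediate: suppose $\Gamma \models \Delta$; then $\models \varphi$ by the first unfolding; then $\vdash \varphi$ by Theorem \ref{thmcomplete}; then $\Gamma \vdash \Delta$ by the second unfolding. So the only substantive step is invoking Theorem \ref{thmcomplete}, and everything else is definition chasing.

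The only point that deserves a moment of attention is the freshness requirement. In the definition of provability, $\vdash \varphi$ requires the existence of a closed tableau with root $@_i @_a \neg \varphi$ where $i \in \Nom_1$ and $a \in \Nom_2$ do not occur in $\varphi$, whereas $\Gamma \vdash \Delta$ requires $i$ and $a$ not to occur in any formula of $\Gamma \cup \Delta$. Since $\varphi = \bigwedge \Gamma \rightarrow \bigvee \Delta$ is a Boolean combination built entirely from the members of $\Gamma \cup \Delta$ (via the abbreviations of Definition of formulae), the set of nominals occurring in $\varphi$ is precisely the union of the nominals occurring in the members of $\Gamma \cup \Delta$. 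Hence the two freshness conditions coincide, and the instance $\vdash \varphi$ delivered by Theorem \ref{thmcomplete} is already a witness for $\Gamma \vdash \Delta$. There is no real obstacle; this corollary is a notational rewriting of the completeness theorem.
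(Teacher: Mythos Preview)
Your proof is correct and follows exactly the paper's approach: set $\varphi = \bigwedge \Gamma \rightarrow \bigvee \Delta$ and invoke Theorem \ref{thmcomplete}. Your additional remark verifying that the freshness conditions on $i$ and $a$ coincide is a careful elaboration the paper leaves implicit, but the underlying argument is identical.
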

\begin{proof}
  By Theorem \ref{thmcomplete} and letting $\varphi$ be $\bigwedge \Gamma \rightarrow \bigvee \Delta$.
\end{proof}

\section{Hybrid Dependent Product Logic (HdPL)}
\label{secHdPL}

A property of HPL is that two diamond operators are independent of each other. What if, for example, one diamond depends on the other? In the following, we deal with that logic, called hybrid dependent product logic (HdPL). 

\subsection{Frames and Models}

A frame in HdPL is called a \emph{dependent product Kripke frame}. This is defined as follows. 
\begin{dfn}
  Given a Kripke frame $\mathfrak{F}_1 = (W_1, R_1)$ and a multi-modal Kripke frame $\mathfrak{F}_2 = (W_2, (R_2(x))_{x \in W_1})$, a \emph{dependent product Kripke frame (d-product frame)} $\mathfrak{F} = (W, R_h, R_v)$ is defined as follows:
  \begin{itemize}
    \item $W = W_1 \times W_2$.
    \item $(x, y) R_h (x', y')$ iff $x R_1 x'$ and $y = y'$.
    \item $(x, y) R_v (x', y')$ iff $x = x'$ and $y R_2(x) y'$.
  \end{itemize}
  The definition of a \emph{dependent product Kripke model (d-product model)} is similar to Definition \ref{defmodel}.
\end{dfn}

As can be seen, the second accessibility relation $R_2$ depends on the world that we are in.

The satisfaction relation $\models$ of a d-product model is the same as Definition \ref{defsatis}, using the notations  $R_h$ and $R_v$. If we rewrite the definition without them, then the case of $\diamondsuit_2$ is different from that of HPL, as follows: 
  \[
    \mathfrak{M}, (x, y) \models \diamondsuit_2 \varphi \quad \text{iff} \quad \text{there is some} \ y' \in W_2 \ \text{s.t.} \ yR_2(x)y' \ \text{and} \ \mathfrak{M}, (x, y') \models \varphi. 
  \]

In d-product models, we lose some useful properties. For example, we cannot swap the order of diamonds freely.

\begin{prop}
  The formula $\diamondsuit_1 \diamondsuit_2 p \leftrightarrow \dia_2 \dia_1 p$ is not valid in all d-product frames.
  \label{propswap}
\end{prop}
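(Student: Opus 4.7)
The plan is to refute the formula by exhibiting a single concrete d-product model in which the biconditional fails at some world. The failure must exploit the only new ingredient relative to ordinary HPL, namely that $R_2$ now depends on the $W_1$-coordinate; in HPL the two relations commute, so any counterexample has to use a point $x$ at which $R_2(x)$ differs from $R_2(x')$ for some $R_1$-successor $x'$.

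Concretely, I would take $W_1 = \{x_0, x_1\}$ with $R_1 = \{(x_0, x_1)\}$, and $W_2 = \{y_0, y_1\}$, and let $R_2$ depend on the first coordinate by
\[
  R_2(x_0) = \emptyset, \qquad R_2(x_1) = \{(y_0, y_1)\}.
\]
For the valuation, set $V(p) = \{(x_1, y_1)\}$ and choose any valuations of the nominals consistent with Definition \ref{defmodel} (the nominals play no role in the argument). I would then evaluate both sides of $\diamondsuit_1 \diamondsuit_2 p \leftrightarrow \diamondsuit_2 \diamondsuit_1 p$ at the world $(x_0, y_0)$.

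For the left-to-right direction, I would check that $\diamondsuit_1 \diamondsuit_2 p$ holds at $(x_0, y_0)$: moving horizontally via $x_0 R_1 x_1$ lands us at $(x_1, y_0)$, and from there the vertical relation $R_2(x_1)$ lets us reach $(x_1, y_1)$, which satisfies $p$. For the other direction, I would verify that $\diamondsuit_2 \diamondsuit_1 p$ fails at $(x_0, y_0)$: any initial vertical step must use $R_2(x_0)$, which is empty, so no $R_v$-successor of $(x_0, y_0)$ exists at all. Hence $\diamondsuit_1 \diamondsuit_2 p$ holds but $\diamondsuit_2 \diamondsuit_1 p$ does not, so the biconditional is false at $(x_0, y_0)$, and the formula is not valid on this d-product frame.

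There is no real obstacle here; the only thing to be careful about is the framing. The purpose of the counterexample is to locate the minimal asymmetry between the two relations, namely an $R_1$-step that changes the available $R_2$-transitions, so the model must be chosen small enough to make the computation immediate yet genuinely exploit the dependency of $R_2$ on $W_1$. Any attempt at a symmetric model, in which $R_2(x_0) = R_2(x_1)$, would reduce to the HPL case and the biconditional would go through; that is why the asymmetric choice above is essential.
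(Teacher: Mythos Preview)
Your proposal is correct and essentially identical to the paper's own proof: the paper uses the same two-by-two model with $R_1=\{(x_1,x_2)\}$, $R_2(x_1)=\emptyset$, $R_2(x_2)=\{(y_1,y_2)\}$, and $V(p)=\{(x_2,y_2)\}$, differing from yours only in indexing conventions. The verification that $\diamondsuit_1\diamondsuit_2 p$ holds while $\diamondsuit_2\diamondsuit_1 p$ fails at the base point proceeds exactly as you describe.
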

\begin{proof}
  Consider the model $\mathfrak{M} = (W, R_h, R_v, V)$ (see Figure \ref{figdframe}):
  \begin{align*}
    W_1 &= \{ x_1, x_2 \}, \ W_2 = \{ y_1, y_2 \}, \\
    R_1 &= \{ (x_1, x_2) \}, \\
    R_2(x_1) &= \emptyset, R_2(x_2) = \{ (y_1, y_2) \}, \\
    V(p) &= \{ (x_2, y_2) \}.
  \end{align*}
  Then, $\mathfrak{M}, (x_1, y_1) \models \dia_1 \dia_2 p$ holds, because we have the following three facts: $(x_1, y_1) R_h (x_2, y_1)$, $(x_2, y_1) R_v (x_2, y_2)$, and $\mathfrak{M}, (x_2, y_2) \models p$. However, it is false that $\mathfrak{M}, (x_1, y_1) \models \dia_2 \dia_1 p$, since there is no world reachable from $(x_1, y_1)$ through $R_v$.
\end{proof}

\begin{figure}[tb]
  \centering
  \begin{tikzpicture}
    \draw[->,>=stealth](3.5,0)--(7,0)node[below]{$W_1$}; 
    \draw[->,>=stealth](4,-0.5)--(4,3)node[left]{$W_2$}; 
    \draw[dashed](4.5,0)--(4.5,3);
    \draw(4.5,-0.05)node[below]{$x_1$};
    \draw[dashed](6,0)--(6,3);
    \draw(6,-0.05)node[below]{$x_2$};
    \draw[dashed](4,0.5)--(7,0.5);
    \draw(3.95,0.5)node[left]{$y_1$};
    \draw[dashed](4,2)--(7,2);
    \draw(3.95,2)node[left]{$y_2$};
    \coordinate (x1y1) at (4.5,0.5);
    \fill (x1y1) circle (2pt);
    \coordinate (x2y1) at (6,0.5);
    \fill (x2y1) circle (2pt);
    \coordinate (x1y2) at (4.5,2);
    \fill (x1y2) circle (2pt);
    \node (x2y2) [fill=white, draw, circle] at (6,2) {$p$};
    \draw[->, >=stealth, thick](x1y1)--(x2y1);
    \draw(5,0.5)node[above]{$R_1$};
    \draw[->, >=stealth, thick](x1y2)--(x2y2);
    \draw(5,2)node[above]{$R_1$};
    \draw[->, >=stealth, thick](x2y1)--(x2y2);
    \draw(6,1.25)node[left]{$R_2$};
  \end{tikzpicture}
  \caption{An example of a d-product frame in the proof of Lemma \ref{propswap}.}
  \label{figdframe}
\end{figure}

\begin{rem}
  Note that d-product frames are generalizations of product frames. In fact, by adding the following condition to any given d-product frame, it becomes a product frame: for all $x, x' \in W_1$, $R_2(x) = R_2(x')$ holds.
\end{rem}

\subsection{Tableau Calculus for HdPL}
Next, let us define the tableau calculus for HdPL. We can obtain it by modifying the tableau calculus of HPL.
First, we replace two rules, $[\dia_2]$ and $[\neg \dia_2]$, with the following two rules:

\begin{equation*}
    \infer[{[\diamondsuit_2^d]}]
      {\deduce{@_i @_b \varphi}{@_i @_a \diamondsuit_2 b}}
      {@_i @_a \diamondsuit_2 \varphi} ,
    \qquad
    \infer[{[\neg \diamondsuit_2^d]}]
      {@_i @_b \neg \varphi}
      {\deduce{@_i @_a \diamondsuit_2 b}{@_i @_a \neg \diamondsuit_2 \varphi}} .
\end{equation*}

The rule $[\dia_2^d]$ has the same restrictions as $[\dia_2]$. That is, $[\dia_2^d]$ can be applied only once per formula, and $b$ is a new nominal in $\Nom_2$. 

Moreover, we need further restrictions to obtain the HdPL tableau calculus. To describe these, we add a new concept called an \emph{accessibility formula}. 

By applying $[\dia_2^d]$, we can add two new formulae $@_i @_a \dia_2 b$ and $@_i @_b \varphi$ to the branch. We call the former formula, containing a new nominal as a right nominal, an \emph{accessibility formula}. Then, we are ready to write restrictions on the rules for constructing the tableau calculus of HdPL; any rule premise must not be an accessibility formula when we apply $[\dia_2^d]$, $[Id_1]$, and $[Id_2]$.

\begin{rem}
  Consider, for example, the following branch (here $a, b \in \Nom_2$):
  \[
    \infer[{[\diamondsuit_2^d]}]
      {\deduce{@_i @_c \diamondsuit_2 b}{@_i @_a \diamondsuit_2 c}}
      {@_i @_a \diamondsuit_2 \diamondsuit_2 b} .
  \]
  Then, $@_i @_a \diamondsuit_2 c$ is an accessibility formula, but $@_i @_c \diamondsuit_2 b$ is not. This is because $b$ is not a new nominal applying $[\diamondsuit_2^d]$.
\end{rem}

In summary, the tableau calculus for HdPL is obtained by modifying the tableau calculus for HPL in the following way:
\begin{enumerate}[i)]
  \item Replacing $[\dia_2]$ and $[\neg \dia_2]$ with $[\dia^d_2]$ and $[\neg \dia^d_2]$, respectively.
  \item Adding a new rule when applying $[\dia_2^d]$, $[Id_1]$, and $[Id_2]$: none of the premises is an accessibility formula.
\end{enumerate}

\subsection{Soundness and Completeness}
\begin{thm}[soundness]
  The tableau calculus for HPL is sound for the class of product frames, i.e., if $\vdash \varphi$, then $\models \varphi$.
  \label{thmsoundd}
\end{thm}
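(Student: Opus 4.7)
The plan follows the four-step template from Section \ref{secsound}: adapt faithfulness to d-product models, prove a step lemma analogous to Lemma \ref{lemsound}, note that closed tableaux are still finite (Lemma \ref{lemsound3} is insensitive to which rules are used), and conclude by reductio as in Theorem \ref{thmsound}. The only rules whose soundness argument is genuinely new are $[\dia_2^d]$ and $[\neg \dia_2^d]$; the others either do not touch the second dimension or are treated verbatim.

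First, I would modify Definition \ref{deffaithful}. Condition (\ref{sound3}) for $t = \dia_2 c$ asserts that $(x, b^V) \models \dia_2 c$ for \emph{every} $x \in W_1$, which tacitly uses that $R_2$ is the same from every $x$. In a d-product model this fails, so I would restrict condition (\ref{sound3}) to $t = c, \neg c$ and add a fourth clause tailored to accessibility formulae: if $@_i @_a \dia_2 b \in \Theta$ is an accessibility formula, then $a^{V} R_2(i^{V}) b^{V}$ in $\mathfrak{M}$. This is exactly the semantic content of an accessibility formula in a d-product model, and it is also what $[\neg \dia_2^d]$ needs in order to be sound.

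Second, I would extend the step lemma with the two new cases. For $[\dia_2^d]$: from $@_i @_a \dia_2 \varphi \in \Theta^m$ and the induction hypothesis $\mathfrak{M}, (i^V, a^V) \models \dia_2 \varphi$, pick $y' \in W_2$ with $a^V R_2(i^V) y'$ and $\mathfrak{M}, (i^V, y') \models \varphi$, then define $V'(b) = W_1 \times \{ y' \}$ leaving all other valuations unchanged. Since $b$ is fresh, all previously established faithfulness conditions are untouched; the new formulae $@_i @_b \varphi$ and the accessibility formula $@_i @_a \dia_2 b$ are directly witnessed by $y'$ and the new clause of faithfulness. For $[\neg \dia_2^d]$: from $@_i @_a \neg \dia_2 \varphi$ and the accessibility formula $@_i @_a \dia_2 b$ the induction hypothesis and the new clause give $\mathfrak{M}, (i^V, a^V) \not\models \dia_2 \varphi$ together with $a^V R_2(i^V) b^V$, hence $\mathfrak{M}, (i^V, b^V) \models \neg \varphi$, which is exactly what is required for $@_i @_b \neg \varphi$.

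The main obstacle, and the reason the restriction forbidding accessibility formulae as premises of $[\dia_2^d]$, $[Id_1]$, $[Id_2]$ was imposed, is that without it the step lemma fails: applying, say, $[Id_2]$ to an accessibility formula $@_i @_a \dia_2 b$ and an equality $@_a c$ would produce $@_i @_c \dia_2 b$, which the new faithfulness clause would force us to read as $c^V R_2(i^V) b^V$, but only $a^V R_2(i^V) b^V$ is semantically justified, and $a^V$ need not equal $c^V$ in a d-product model because equalities of $\Nom_2$ nominals are not propagated across all $W_1$-fibres. Verifying in each of these three cases that the restriction is exactly what makes the induction go through is the delicate part; once this is done, Lemma \ref{lemsound3} and the reductio step of Theorem \ref{thmsound} carry over with only the words ``product frame'' replaced by ``d-product frame'' throughout.
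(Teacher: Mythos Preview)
Your core argument for $[\dia_2^d]$ is essentially the paper's, but two points deserve correction.

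First, modifying Definition \ref{deffaithful} is unnecessary. In HdPL the accessibility formula produced by $[\dia_2^d]$ has the doubly-prefixed shape $@_i @_a \dia_2 b$, so it is already covered by clause (\ref{sound1}) with $\psi = \dia_2 b$: the requirement $\mathfrak{M}, (i^V, a^V) \models \dia_2 b$ unpacks exactly to $a^V R_2(i^V) b^V$, which is your proposed fourth clause. Clause (\ref{sound3}) with $t = \dia_2 c$ is simply never triggered, because no formula of the singly-prefixed form $@_a \dia_2 c$ is ever generated in an HdPL tableau. The paper accordingly keeps Definition \ref{deffaithful} unchanged and handles the accessibility formula via clause (\ref{sound1}); your extra clause is redundant rather than wrong.

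Second, your diagnosis of the restriction on $[Id_1]$, $[Id_2]$, $[\dia_2^d]$ is mistaken. You write that from $@_a c$ one cannot conclude $a^V = c^V$ in a d-product model ``because equalities of $\Nom_2$ nominals are not propagated across all $W_1$-fibres''. But valuations of $\Nom_2$-nominals are still of the form $V(c) = W_1 \times \{ c^V \}$ in a d-product model, so $\mathfrak{M}, (x, a^V) \models c$ holds iff $a^V = c^V$, independently of $x$. Hence applying $[Id_2]$ to an accessibility formula would \emph{not} break the step lemma: $a^V = c^V$ gives $c^V R_2(i^V) b^V$ immediately. Restricting rules can only make soundness easier, never harder; the ban on accessibility formulae as premises is there for the completeness argument (the quasi-subformula property, Lemma \ref{lemquasid}), not for soundness. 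So this paragraph of your proposal, while harmless to the soundness proof itself, misattributes the role of the restriction.
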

\begin{proof}
  The only differences between this proof and that of Theorem \ref{thmsound} are the cases of $[\dia_2^d]$, $[\neg \dia_2^d]$, $[Id_1]$, and $[Id_2]$ in the proof of Lemma \ref{lemsound}. We only do the case of $[\dia_2^d]$. The other cases are left to the reader.
  
  Suppose we acquire two new formulae $@_j @_b \diamondsuit_2 c$ and $@_j @_c \psi$ by applying $[\dia_2^d]$ to $\Theta^m$. Let $\Theta^{m+1}$ be a branch that contains both formulae. Since we can apply $[\dia_2^d]$ to $\Theta^m$, $@_j @_b \diamondsuit_2 \psi \in \Theta^m$ holds, and by assumption $\mathfrak{M}, (j^V, b^V) \models \diamondsuit_2 \psi$ holds. This implies that there is some $y' \in W_2$ such that $b^V R_2(j^V) y'$ and $\mathfrak{M}, (j^V, y') \models \psi$. Now, we define a new valuation function $V'$:
    \[
     V'(s) = 
     \begin{cases}
       W_1 \times \{ y' \} & (s = c) \\
       V(s) & (\text{otherwise}) .
     \end{cases} 
    \]
    Define $\mathfrak{M}' = (\mathfrak{F}, V')$. Then, we have $b^{V'} R_2(j^{V'}) c^{V'}$ and $\mathfrak{M}', (j^{V'}, c^{V'}) \models \psi$. By the latter,  Definition \ref{deffaithful} (\ref{sound1}) holds for $@_j @_c \psi \in \Theta^{m+1}$ and $(j^{V'}, c^{V'})$. By the former, $\mathfrak{M}', (j^{V'}, b^{V'}) \models \diamondsuit_2 c$. Then, Definition \ref{deffaithful} (\ref{sound1}) holds for $@_j @_b \diamondsuit_2 c \in \Theta^{m+1}$. Moreover, since $c$ does not occur in $\Theta^m$, the interpretation of any formula in $\Theta^m$ does not change even if $V$ is changed into $V'$. Therefore, $\mathfrak{M}'$ is faithful to $\Theta^{m+1}$.
\end{proof}

To show completeness, we first show that the quasi-subformula property holds even in the tableau calculus for HdPL.

\begin{lem}
  Let $\mathcal{T}$ be a tableau. If a formula in $\mathcal{T}$ has the form $@_i @_a \varphi$, then one of the following statements holds:
  \begin{itemize}
    \item $@_i @_a \varphi$ is a quasi-subformula of the root formula of $\mathcal{T}$.
    \item $@_i @_a \varphi$ is an accessibility formula.
  \end{itemize}
  \label{lemquasid}
\end{lem}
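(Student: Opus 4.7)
The plan is to adapt the induction from Lemma \ref{lemquasi}, except that the inductive claim now has two disjuncts: each generated formula of the form $@_i @_a \varphi$ is either a quasi-subformula of the root or is itself an accessibility formula. I would proceed by induction on the number of rule applications used to introduce the formula into the branch. The rules that HdPL inherits unchanged from HPL are handled exactly as in Lemma \ref{lemquasi}: inspection shows that each of their conclusions is a quasi-subformula of the root whenever its premise is, and the six rules $[Red_1], [Red_2], [\neg_1], [\neg_2], [Id'_1], [Id'_2]$ do not generate formulae of the form $@_i @_a \varphi$ at all and can be skipped.

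The first genuinely new case is $[\dia_2^d]$, whose two conclusions are $@_i @_a \dia_2 b$ and $@_i @_b \varphi$. The first is an accessibility formula by the very definition introduced just before the statement, so the second disjunct of the claim applies directly. For the second, the restriction on $[\dia_2^d]$ forbids its premise $@_i @_a \dia_2 \varphi$ from being an accessibility formula; the induction hypothesis therefore makes it a quasi-subformula of the root, so $\varphi$ is a subformula of the root and $@_i @_b \varphi$ is a quasi-subformula as required. The cases of $[Id_1]$ and $[Id_2]$ are analogous: the same kind of side condition prevents their main premise $@_i @_a \varphi$ from being an accessibility formula, so the conclusion inherits quasi-subformula status through the induction hypothesis.

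For $[\neg \dia_2^d]$, I would observe that the premise $@_i @_a \neg \dia_2 \varphi$ has principal connective $\neg$ rather than $\dia_2$, and so cannot itself be an accessibility formula; the induction hypothesis then makes it a quasi-subformula of the root, whence $\varphi$ is a subformula of the root and the conclusion $@_i @_b \neg \varphi$ qualifies as a quasi-subformula via the clause ``$\neg \chi$ with $\chi$ a subformula.'' The main point of the argument, and the only real obstacle, is exactly this bookkeeping: I have to rule out, for every applicable rule, the possibility that an accessibility formula sits in a principal-premise position and leaks its fresh nominal into an unrelated conclusion. The side conditions newly imposed on $[\dia_2^d]$, $[Id_1]$, and $[Id_2]$ are precisely what blocks such leaks, while a brief syntactic check verifies that no other rule can take an accessibility formula as a principal premise, since no accessibility formula carries the principal connective that those other rules decompose.
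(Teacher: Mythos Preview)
Your proposal is correct and follows essentially the same approach as the paper: an induction over rule applications, with the HPL-inherited cases handled as in Lemma \ref{lemquasi} and the four modified cases $[\dia_2^d]$, $[\neg \dia_2^d]$, $[Id_1]$, $[Id_2]$ treated separately. You are in fact more thorough than the paper, which writes out only the $[\dia_2^d]$ case and leaves the rest to the reader; in particular, you make explicit why the induction hypothesis yields the quasi-subformula disjunct (rather than the accessibility-formula disjunct) for the premise of each modified rule, whereas the paper silently assumes this.
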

\begin{proof}
  We only do the case of $[\dia_2^d]$. The other cases are left to the reader. Note that we can prove this lemma in the same way as Lemma \ref{lemquasi}, except the cases of $[\dia_2^d]$, $[\neg \dia_2^d]$, $[Id_1]$, and $[Id_2]$.
  
  Let us consider the case of $[\dia_2^d]$. First, $@_i @_a \dia_2 b$ is an accessibility formula. Also, $@_i @_b \varphi$ is a quasi-subformula of $@_i @_a \diamondsuit_2 \varphi$, and by the induction hypothesis, $@_i @_a \diamondsuit_2 \varphi$ is a quasi-subformula of the root formula.
\end{proof}

Next, we change the definition of saturation (Definition \ref{defsat}) of the HdPL tableau calculus as follows. 

\begin{dfn}
  A branch $\Theta$ of an HdPL tableau is saturated if $\Theta$ satisfies all the conditions below (we show only the differences from Definition \ref{defsat}):
  \begin{description}
  \item[\ref{satdia2})] If $@_i @_a \diamondsuit_2 \varphi \in \Theta$ and it is not an accessibility formula, then there is some $b \in \textbf{Nom}_2$ such that $@_i @_a \diamondsuit_2 b, @_i @_b \varphi \in \Theta$.
  \item[\ref{satnegdia2})] If $@_i @_a \neg \diamondsuit_2 \varphi, @_i @_a \diamondsuit_2 b \in \Theta$, then $@_i @_b \neg \varphi \in \Theta$.
  \item[\ref{satid11})] If $@_i @_a \varphi, @_i j \in \Theta$ and $@_i @_a \varphi$ is not an accessibility formula, then $@_j @_a \varphi \in \Theta$.
  \item[\ref{satid12})] If $@_i @_a \varphi, @_a b \in \Theta$ and $@_i @_a \varphi$ is not an accessibility formula, then $@_i @_b \varphi \in \Theta$.
  \end{description}
  \label{defsatd}
\end{dfn}

The discussion about right nominals does not change in proving the completeness of the tableau calculus for HdPL. Thus, we can use the same definition of an urfather of a nominal as that for HPL. Moreover, the properties of urfathers (Lemma \ref{lemurpro}) are preserved in the tableau calculus for HdPL. 

Then, we are ready to construct a d-product model from a branch of an HdPL tableau. Note that the only difference is how we define the accessibility relations $R_2$.

\begin{dfn}
  Given an open branch $\Theta$ of an HdPL tableau, a d-product model $\mathfrak{M}^{\Theta} = (W^{\Theta}, R_h^{\Theta}, R_v^\Theta, V^{\Theta})$ is defined as follows:
  \begin{align*}
    W_1^{\Theta} &= \{ u_\Theta(i) \in \Nom_1 \mid i \ \text{occurs in} \ \Theta \}, \\
    W_2^{\Theta} &= \{ u_\Theta(a) \in \Nom_2 \mid a \ \text{occurs in} \ \Theta \}, \\
    W^{\Theta} &= W_1^\Theta \times W_2^\Theta,  \\
    R_h^{\Theta} &= \{ ((u_\Theta(i), u_\Theta(a)), (u_{\Theta}(j), u_\Theta(a))) \mid @_i \diamondsuit_1 j \in \Theta \}, \\
    R_v^{\Theta}(u_\Theta(i)) &= \{ ((u_\Theta(i), u_\Theta(a)), (u_\Theta(i), u_{\Theta}(b))) \mid @_i @_a \diamondsuit_2 b \in \Theta \}, \\
    V^{\Theta}(p) &= \{ (u_\Theta(i), u_\Theta(a)) \mid @_i @_a p \in \Theta \}, \quad \text{where} \quad p \in \Prop, \\
    V^{\Theta}(i) &= \{ u_{\Theta}(i) \} \times W_2, \quad \text{where} \quad i \in \Nom_1, \\
    V^{\Theta}(a) &= W_1 \times \{ u_{\Theta}(a) \},  \quad \text{where} \quad a \in \Nom_2.
  \end{align*}
\end{dfn}

\begin{lem}[model existence theorem for HdPL]
  Let $\Theta$ be an open branch of an HdPL tableau and $@_i @_a \varphi$ be a quasi-subformula of a root formula. Then, we have
    \[
      \text{if } @_i @_a \varphi \in \Theta, \text{ then } \mathfrak{M}^{\Theta}, (u_\Theta(i), u_{\Theta}(a)) \models \varphi.
    \]
  \label{lemmodelexistd}
\end{lem}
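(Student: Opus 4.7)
The plan is to prove this by induction on the complexity of $\varphi$, mirroring the proof of Lemma \ref{lemmodelexist}. Most cases carry over verbatim, because the valuation $V^\Theta$ on propositional variables and nominals, the horizontal relation $R_h^\Theta$, and the saturation clauses for Boolean connectives, $\dia_1$, $@$, the reduction rules, and the $\neg_k$ rules are unchanged from the HPL construction. The hypothesis that $@_i @_a \varphi$ is a quasi-subformula of the root is preserved at each inductive step: whenever a saturation clause produces a new formula of the form $@_{i'} @_{a'} \psi$ (or with a single prefix) from $@_i @_a \varphi$, the subformula $\psi$ is a subformula, or the negation of one, of $\varphi$, and hence a quasi-subformula of the root, so the induction hypothesis applies to it.

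The genuinely new cases are $\varphi = \dia_2 \psi$ and $\varphi = \neg \dia_2 \psi$, which must invoke the revised relation $R_v^\Theta(u_\Theta(i))$ and the revised saturation clauses in Definition \ref{defsatd}. For $\varphi = \dia_2 \psi$, since $@_i @_a \dia_2 \psi$ is a quasi-subformula of the root it is, by Lemma \ref{lemquasid}, not an accessibility formula; hence the revised clause \ref{satdia2} yields some $b \in \Nom_2$ with $@_i @_a \dia_2 b \in \Theta$ and $@_i @_b \psi \in \Theta$. The first membership gives $(u_\Theta(i), u_\Theta(a)) R_v^\Theta(u_\Theta(i)) (u_\Theta(i), u_\Theta(b))$ directly by the definition of $R_v^\Theta$, and the second is again a quasi-subformula of the root, so the induction hypothesis gives $\mathfrak{M}^\Theta, (u_\Theta(i), u_\Theta(b)) \models \psi$, completing this case. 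The case $\varphi = \neg \dia_2 \psi$ is dual: every $R_v^\Theta(u_\Theta(i))$-successor of $(u_\Theta(i), u_\Theta(a))$ is of the form $(u_\Theta(i), u_\Theta(b))$ with $@_i @_a \dia_2 b \in \Theta$, so the revised clause \ref{satnegdia2} delivers $@_i @_b \neg \psi \in \Theta$, a quasi-subformula, and the induction hypothesis refutes $\psi$ at $(u_\Theta(i), u_\Theta(b))$.

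The main obstacle is bookkeeping around accessibility formulas. Because the revised clauses \ref{satid11} and \ref{satid12} no longer apply when the premise is an accessibility formula, we must ensure that every formula on which we wish to invoke them is in fact a quasi-subformula of the root. This is precisely what Lemma \ref{lemquasid} and the explicit quasi-subformula hypothesis in the statement guarantee; the same observation is what makes the proof of Lemma \ref{lemurpro}(iii) go through in the HdPL setting. A further subtle point is that for $\neg \dia_2 \psi$ we only need to quantify over the accessibility-formula witnesses $@_i @_a \dia_2 b$, since these generate exactly the $R_v^\Theta(u_\Theta(i))$-successors of $(u_\Theta(i), u_\Theta(a))$; hence the exclusion of accessibility formulas from clause \ref{satdia2} does not leave out any relevant successor, and the dependence of $R_v^\Theta$ on the first coordinate aligns with the d-product semantics.
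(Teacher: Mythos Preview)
Your proposal is correct and follows essentially the same approach as the paper: induction on the complexity of $\varphi$, with the HPL cases carried over verbatim and only the $\dia_2$ and $\neg\dia_2$ cases reworked via the revised saturation clauses of Definition~\ref{defsatd} and the dependent relation $R_v^\Theta$. The paper's version additionally invokes Lemma~\ref{lemurpro}(\ref{lemurcl}) at the start of the $\neg\dia_2$ case to obtain $@_{u_\Theta(i)} @_{u_\Theta(a)} \neg\dia_2 \psi \in \Theta$, but it does not subsequently use this fact in the displayed argument, so your omission of it is harmless; your extra commentary on why the quasi-subformula hypothesis is needed (to sidestep the restriction on clauses \ref{satid11}, \ref{satid12}, and \ref{satdia2}) makes explicit what the paper leaves implicit.
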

\begin{proof}
  By induction on the complexity of $\varphi$. We only do the cases of $\varphi = \diamondsuit_2 \psi$ and $\varphi = \neg \diamondsuit_2 \psi$; the other cases are same as in the proof of Lemma \ref{lemmodelexist}.
  \begin{description}
    \item[{$[\varphi = \diamondsuit_2 \psi]$}] Suppose that $@_i @_a \diamondsuit_2 \psi \in \Theta$. Since $\Theta$ is saturated, by Definition \ref{defsatd} (\ref{satdia2}) there is a nominal $b \in \Nom_2$ such that $@_i @_a \diamondsuit_2 b, @_i @_b \psi \in \Theta$. From the former and the definition of $R_v^{\Theta}$, we have $(u_{\Theta}(i), u_{\Theta}(a)) R_v^\Theta (u_{\Theta}(i), u_\Theta(b))$, and by applying the induction hypothesis to the latter, we have $\mathfrak{M}^{\Theta}, (u_\Theta(i), u_{\Theta}(b)) \models \psi$. Therefore $\mathfrak{M}^{\Theta}, (u_\Theta(i), u_{\Theta}(a)) \models \diamondsuit_2 \psi$.

    \item[{$[\varphi = \neg \diamondsuit_2 \psi]$}] Suppose that $@_i @_a \neg \diamondsuit_2 \psi \in \Theta$. By Lemma \ref{lemurpro} (\ref{lemurcl}) we have $@_{u_\Theta(i)} @_{u_\Theta(a)} \neg \diamondsuit_2 \psi \in \Theta$. Now, assume that there is a nominal $b \in \Nom_2$ such that $(u_\Theta(i), u_\Theta(a)) R_v^\Theta (u_\Theta(i), u_\Theta(a))$ and $@_i @_a \dia_2 b \in \Theta$ (if such a $b$ does not exist, in the model $\mathfrak{M}^\Theta$ there is no possible world that is reachable from $(u_\Theta(i), u_\Theta(a))$ through $R_v^\Theta$). Thus, $\mathfrak{M}^{\Theta}, (u_\Theta(i), u_{\Theta}(a)) \models \neg \diamondsuit_2 \psi$.) If we take such a $b$, since $\Theta$ is saturated, by Definition \ref{defsatd} (\ref{satnegdia2}) we have $@_i @_b \neg \psi \in \Theta$. Hence, by the induction hypothesis, $\mathfrak{M}^{\Theta}, (u_\Theta(i), u_{\Theta}(b)) \models \neg \psi$. This holds for all $b$, so we have $\mathfrak{M}^{\Theta}, (u_\Theta(i), u_\Theta(a)) \models \square_2 \neg \psi$. Therefore, $\mathfrak{M}^{\Theta}, (u_\Theta(i), u_\Theta(a)) \models \neg \diamondsuit_2 \psi$.
  \end{description}
\end{proof}

Thus, we have reached our goal.

\begin{thm}[completeness]
  The tableau calculus for HdPL is complete for the class of d-product frames, i.e., if $\models \varphi$, then $\vdash \varphi$.
  \label{thmcompleted}
\end{thm}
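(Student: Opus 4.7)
The plan is to mirror the contraposition argument used in the proof of Theorem \ref{thmcomplete}. Assume $\not\vdash \varphi$, and pick nominals $i \in \Nom_1$ and $a \in \Nom_2$ that do not occur in $\varphi$. The first step is to construct an HdPL analogue of Lemma \ref{lemconst}: starting from the root formula $@_i @_a \neg \varphi$, iteratively apply the HdPL tableau rules (respecting the accessibility-formula restrictions on $[\dia_2^d]$, $[Id_1]$, and $[Id_2]$) and take the limit tableau $\mathcal{T}^*$. The argument that every open branch of $\mathcal{T}^*$ is saturated in the sense of Definition \ref{defsatd} is essentially the same as in Lemma \ref{lemconst}: the modified saturation clauses precisely drop the demand that rules fire on accessibility formulae, which matches the restriction on rule application, so whenever a condition of Definition \ref{defsatd} could fail at stage $k$ it is repaired by stage $k+1$.

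By assumption $\mathcal{T}^*$ is not closed, so there is an open saturated branch $\Theta$. Build the d-product model $\mathfrak{M}^\Theta$ as in the preceding definition. Since the root formula $@_i @_a \neg \varphi$ is trivially a quasi-subformula of itself and lies in $\Theta$, the HdPL model existence theorem (Lemma \ref{lemmodelexistd}) yields
\[
  \mathfrak{M}^\Theta, (u_\Theta(i), u_\Theta(a)) \models \neg \varphi ,
\]
whence $\mathfrak{M}^\Theta, (u_\Theta(i), u_\Theta(a)) \not\models \varphi$, and therefore $\not\models \varphi$, which is the contrapositive of the desired implication.

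The main obstacle, and the only place where the HdPL proof genuinely diverges from the HPL one, is the bookkeeping around accessibility formulae during the saturation construction. One must verify that the restriction preventing $[Id_1]$, $[Id_2]$, and $[\dia_2^d]$ from firing on accessibility formulae is compatible with the weakened saturation clauses \ref{satdia2}), \ref{satid11}), and \ref{satid12}) in Definition \ref{defsatd}, so that no ``missing'' formula is ever required by Lemma \ref{lemmodelexistd}. Concretely, in the inductive case $\varphi = \neg \dia_2 \psi$ the witness nominal $b$ arises from a formula $@_i @_a \dia_2 b \in \Theta$ which is itself an accessibility formula (hence need not be a quasi-subformula of the root), and that is precisely why the induction in Lemma \ref{lemmodelexistd} is restricted to quasi-subformulae while $R_v^\Theta$ is populated from all witnessed $\dia_2$-formulae of $\Theta$. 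Once this coherence is checked, the rest of the proof is a routine transposition of the HPL argument.
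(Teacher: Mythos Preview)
Your proposal is correct and follows essentially the same contraposition argument the paper uses (the paper simply writes ``Thus, we have reached our goal'' and states the theorem, relying on the proof of Theorem~\ref{thmcomplete} with Lemma~\ref{lemmodelexistd} in place of Lemma~\ref{lemmodelexist}). Your extra paragraph on the coherence between the accessibility-formula restrictions and the weakened saturation clauses of Definition~\ref{defsatd} makes explicit exactly the bookkeeping the paper leaves implicit, but it does not alter the route.
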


\subsection{Restrictions on Frames}
\label{subsecinc}

By adding a special axiom to the HdPL, it is possible to place restrictions on the accessibility relation of the Kripke frame. We focus on the property ``frame is decreasing'' introduced in \cite{sano2010}. 

An example in which this property is used is $T \times W$ logic \cite{von1997, thomason2002_7}, which describes histories with the same time order. Let us examine the semantics of $T \times W$ logic, called a $T \times W$ frame. 

\begin{dfn}[\cite{von1997}, Definition 1.1]
  A $T \times W$ frame is a quadruple $\Kframe = (T, <, W, (\sim_t)_{t \in T})$, where
  \begin{itemize}
    \item $T$ is a non-empty set of time points, 
    \item $<$ is a linear ordering on $T$,
    \item $W$ is a non-empty set of worlds (or histories), and
    \item for each $t \in T$, $\sim_t$ is an equivalence relation on $W$
  \end{itemize}
  that satisfies the following condition:
  \begin{center}
    For all $w, w' \in W$ and $t, t' \in T$, if $w \sim_t w'$ and $t' < t$, then $w \sim_{t'} w'$.
  \end{center}
\end{dfn}

This logic illustrates that the more time passes, the more differentiated the world becomes. In other words, if we can assume that some two histories are equivalent, then the equivalence holds at any time point in the past. Thus, we need a property to express that ``every relationship that exists in the present has existed in the past.'' This statement corresponds to the property of decreasing. \footnote{\cite{sano2010} introduced a $T \times W$ frame as an example of an increasing model. If we interpret $\dia_1 p$ as ``p holds in some \emph{past} point,'' we can assume that $T \times W$ frame is increasing.}

Then, we try adding a corresponding rule of the tableau calculus.

\begin{dfn}
  A d-product frame $\mathfrak{F} = (W_1 \times W_2, R_h, R_v)$ is \emph{decreasing} if for all $x, x' \in W_1$, if $x R_1 x'$, then $R_2(x) \supseteq R_2(x')$.
\end{dfn}

\begin{prop}[\cite{sano2010}, Proposition 4.15.] The property that a d-product frame $\mathfrak{F} = (W_1 \times W_2, R_h, R_v)$ is decreasing is definable by the following formula:
  \begin{equation}
    \dia_1 @_a \diamondsuit_2 b \rightarrow @_a \diamondsuit_2 b . \tag{AxDec}
    \label{axdec}
  \end{equation}
  \label{propdec}
\end{prop}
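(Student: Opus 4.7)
The plan is to prove both directions of the frame-definability claim: that every decreasing d-product frame validates (\ref{axdec}), and conversely that any frame validating (\ref{axdec}) under every valuation must be decreasing.

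For the first direction, I would fix a decreasing frame $\mathfrak{F}$, an arbitrary model $\mathfrak{M}$ based on it, and an arbitrary point $(x,y)$, and assume $\mathfrak{M},(x,y) \models \dia_1 @_a \dia_2 b$. Unfolding the semantics of $\dia_1$ gives some $x' \in W_1$ with $x R_1 x'$ and $\mathfrak{M},(x',y) \models @_a \dia_2 b$; applying $@_a$ and $\dia_2$ successively produces some $y'' \in W_2$ with $a^V R_2(x') y''$ and $\mathfrak{M},(x',y'') \models b$. Since $b$ is a nominal in $\Nom_2$, this forces $y'' = b^V$, so $a^V R_2(x') b^V$. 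The decreasing condition $R_2(x) \supseteq R_2(x')$ (available because $x R_1 x'$) then yields $a^V R_2(x) b^V$, which rewinds through the semantics to $\mathfrak{M},(x,y) \models @_a \dia_2 b$, as required.

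For the converse, I would argue by contrapositive: assume $\mathfrak{F}$ is not decreasing and construct a concrete valuation falsifying (\ref{axdec}). Non-decreasingness supplies $x, x' \in W_1$ and $y_1, y_2 \in W_2$ such that $x R_1 x'$ and $y_1 R_2(x') y_2$ but \emph{not} $y_1 R_2(x) y_2$. I then define $V$ so that $V(a) = W_1 \times \{y_1\}$ and $V(b) = W_1 \times \{y_2\}$ (so $a^V = y_1$ and $b^V = y_2$), letting the remaining valuation data be arbitrary. Picking any $y \in W_2$, I verify on the one hand that $\mathfrak{M},(x,y) \models \dia_1 @_a \dia_2 b$ by using the witnesses $x'$ (for $\dia_1$) and $y_2$ (for $\dia_2$), and on the other hand that $\mathfrak{M},(x,y) \not\models @_a \dia_2 b$, because at $(x,y_1)$ the only candidate for a $\dia_2$-successor satisfying $b$ is $y_2$, and $y_1 R_2(x) y_2$ fails by choice. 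This exhibits a counterexample to the axiom on $\mathfrak{F}$ under a suitable valuation, completing the contrapositive.

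I expect the first direction to be essentially mechanical once the semantics of nominal-indexed diamonds is unpacked carefully. The subtle point, and the main place to be careful, is the converse: one must remember that in HdPL the accessibility relation $R_2$ depends on the first coordinate, so when checking that $\mathfrak{M},(x,y) \not\models @_a \dia_2 b$ the relevant relation is $R_2(x)$, not $R_2(x')$; the construction is designed precisely to separate these two relations at the pair $(y_1, y_2)$. Provided this distinction is handled cleanly, and one remarks that the choice of the auxiliary $y$ is immaterial because $@_a$ discards the second coordinate, the argument closes.
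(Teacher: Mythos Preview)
Your proposal is correct and follows essentially the same approach as the paper: both directions hinge on the observation that $@_a\dia_2 b$ evaluated at first coordinate $x$ encodes exactly $a^V R_2(x) b^V$, and the converse is handled by choosing a valuation with $a^V$ and $b^V$ placed at the witnessing pair. The only cosmetic difference is that you phrase the converse as a contrapositive (assume non-decreasing, falsify the axiom) whereas the paper argues directly (assume the axiom valid, derive $R_2(x)\supseteq R_2(x')$); the underlying construction is identical.
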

\begin{proof}
  It suffices to show the following property:
  \begin{center}
    $\mathfrak{F}$ is decreasing $\iff$ (\ref{axdec}) is valid in $\mathfrak{F}$.
  \end{center}
  \begin{description}
    \item[($\Rightarrow$)] Suppose that $\mathfrak{F}$ is decreasing. Take a valuation function $V$ and a point $(x, y) \in W_1 \times W_2$ such that $(\mathfrak{F}, V), (x, y) \models \dia_1 @_a \diamondsuit_2 b$ (we abbreviate it as $(x, y) \models \varphi$). Then, there is an $x' \in W_1$ such that $x R x'$ and $(x', y) \models @_a \diamondsuit_2 b$, and by definition, $a^V R_2(x') b^V$ holds. Since $\mathfrak{F}$ is decreasing, we have $a^V R_2(x) b^V$. Therefore, $(x, y) \models @_a \diamondsuit_2 b$ holds.
    \item[($\Leftarrow$)] Suppose that (\ref{axdec}) is valid in $\mathfrak{F}$. Take states $x, x' \in W_1$ and $y, y' \in W_2$ such that $x R_1 x'$ and $y R_2(x') y'$. Also, take a valuation function $V$ such that $a^V = y$ and $b^V = y'$. Then, $(x', y) \models @_a \diamondsuit_2 b$ holds, and by $x R_1 x'$ we have $(x, y) \models \dia_1 @_a \diamondsuit_2 b$. Since (\ref{axdec}) holds in all worlds in this model, we acquire that $(x, y) \models \dia_1 @_a \diamondsuit_2 b$. This implies that $y R_2(x) y'$. Therefore, we conclude that $R_2(x) \supseteq R_2(x')$.
  \end{description}
\end{proof}

So far, we have confirmed that the frame property of decreasing is definable in a Hilbert-style axiomatization of HdPL. Now, we show that we can construct the HdPL tableau calculus corresponding to decreasing frames. Note that the way to construct this is to a add a rule, not an axiom.

\begin{dfn}
  The rule $[Dec]$ is defined as follows:
  \[
    \infer[{[Dec]}]
      {@_i @_a \diamondsuit_2 b}
      {\deduce{@_i \diamondsuit_1 j} {@_j @_a \diamondsuit_2 b}} .
  \]
  where the formulae both above and below the line are accessibility formulae.
  
  We write $\mathbf{TAB} + Dec$ for an HdPL tableau calculus with the $[Dec]$ rule.
  
  We have to fix the definition of saturation. A branch of a tableau of $\mathbf{TAB} + Dec$ is saturated if it satisfies not only all the conditions of Definition \ref{defsatd} but also the following condition:
  \begin{itemize}
  \item If $@_j @_a \diamondsuit_2 b, @_i \diamondsuit_1 j \in \Theta$ and they are accessibility formulae, then $@_i @_a \diamondsuit_2 b \in \Theta$.
\end{itemize}
\end{dfn}

\begin{ex}
  The axiom (\ref{axdec}) of Proposition \ref{propdec} is provable in $\mathbf{TAB} + Dec$. Figure \ref{figdectab} is a proof using tableau calculus. Note that $\neg (\dia_1 @_a \diamondsuit_2 b \rightarrow @_a \diamondsuit_2 b)$ is equivalent to the formula $\dia_1 @_a \diamondsuit_2 b \land \neg @_a \diamondsuit_2 b$.
\end{ex}

\begin{figure}
  \begin{align*}
  1. &@_{i_0} @_{a_0} (\dia_1 @_a \diamondsuit_2 b \land \neg @_a \diamondsuit_2 b) \\
  2. &@_{i_0} @_{a_0} \dia_1 @_a \diamondsuit_2 b &(1, [\land]) \\
  3. &@_{i_0} @_{a_0} \neg @_a \diamondsuit_2 b &(1, [\land]) \\
  4. &{@_{i_0} \diamondsuit_1 i_1}^* &(2, [\dia_1]) \\
  5. &@_{i_1} @_{a_0} @_a \diamondsuit_2 b &(2, [\dia_1]) \\
  6. &@_{i_0} @_a \neg \diamondsuit_2 b &(3, [\neg @_2]) \\
  7. &@_{i_1} @_a \diamondsuit_2 b &(5, [@_2]) \\
  8. &{@_{i_1} @_a \diamondsuit_2 a_1}^* &(7, [\dia_2]) \\
  9. &@_{i_1} @_{a_1} b &(7, [\dia_2]) \\
  10. &{@_{i_0} @_a \diamondsuit_2 a_1}^* &(4, 8, [Dec]) \\
  11. &@_{i_0} @_{a_1} \neg b &(6, 10, [\neg \dia_2]) \\
  12. &@_{a_1} b &(9, [Red_2]) \\
  13. &@_{a_1} \neg b &(11, [Red_2]) \\
  14. &\bot &(12, 13)
  \end{align*}
  The formulae with ${}^*$ are accessibility formulae.
  \caption{A tableau for HdPL that proves (\ref{axdec}).}
  \label{figdectab}
\end{figure}

What we do next is to show that $\mathbf{TAB} + Dec$ is sound and complete for the class of decreasing d-product frames.

\begin{thm}
  The tableau calculus $\mathbf{TAB} + Dec$ is sound for the class of decreasing d-product frames.
\end{thm}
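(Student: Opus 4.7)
The plan is to mirror the proof of Theorem \ref{thmsoundd} almost verbatim, restricting attention to the class of decreasing d-product frames and adding a single new case to Lemma \ref{lemsound} for the rule $[Dec]$. All other cases (which do not depend on the decreasing property) carry over unchanged because neither the definition of faithfulness nor the other rules have been altered, and the restriction to a narrower class of frames only makes the soundness statement stronger, not weaker. So it suffices to verify that, whenever $\mathfrak{M} = (\mathfrak{F}, V)$ is faithful to $\Theta^m$ and $\mathfrak{F}$ is decreasing, the model $\mathfrak{M}$ itself (no change of valuation is needed) is faithful to the branch $\Theta^{m+1}$ obtained by a single application of $[Dec]$.

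Assume $[Dec]$ is applied with premises $@_j @_a \diamondsuit_2 b$ and $@_i \diamondsuit_1 j$ in $\Theta^m$, producing the conclusion $@_i @_a \diamondsuit_2 b$. Since $\mathfrak{M}$ is faithful to $\Theta^m$, Definition \ref{deffaithful} (\ref{sound1}) gives $\mathfrak{M}, (j^V, a^V) \models \diamondsuit_2 b$, that is, $a^V R_2(j^V) b^V$. Definition \ref{deffaithful} (\ref{sound2}) applied to $@_i \diamondsuit_1 j$ yields $\mathfrak{M}, (i^V, y) \models \diamondsuit_1 j$ for every $y \in W_2$, and by the semantics of $\diamondsuit_1$ this is exactly $i^V R_1 j^V$. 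Because $\mathfrak{F}$ is decreasing, $i^V R_1 j^V$ forces $R_2(i^V) \supseteq R_2(j^V)$, so from $a^V R_2(j^V) b^V$ we conclude $a^V R_2(i^V) b^V$, which is precisely $\mathfrak{M}, (i^V, a^V) \models \diamondsuit_2 b$. This establishes condition (\ref{sound1}) for the new double-prefixed formula. The newly added accessibility formula $@_i @_a \diamondsuit_2 b$ is of the form $@_j @_b \psi$ with $\psi = \diamondsuit_2 b$, so condition (\ref{sound1}) is all that is required; (\ref{sound2}) and (\ref{sound3}) are not engaged by this conclusion, and no formula previously in $\Theta^m$ has its interpretation changed because $V$ is unchanged. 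Hence $\mathfrak{M}$ is faithful to $\Theta^{m+1}$.

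With the step lemma extended, Lemma \ref{lemsound2} and Lemma \ref{lemsound3} require no modification, and the overall contradiction argument of Theorem \ref{thmsound}/Theorem \ref{thmsoundd} goes through: if $\vdash \varphi$ in $\mathbf{TAB} + Dec$ but $\varphi$ fails at some point of a decreasing d-product model, then by adjusting the valuation of the two fresh nominals in the root formula we obtain a model faithful to some branch of the closed tableau, which contradicts closure exactly as before.

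The only non-routine point is the one handled above, namely making essential use of the decreasing hypothesis to lift $a^V R_2(j^V) b^V$ to $a^V R_2(i^V) b^V$; this is precisely the semantic content of the restriction stated in the rule $[Dec]$, so it fits cleanly without any need to modify the faithfulness notion or to alter the valuation. I do not expect any further obstacle.
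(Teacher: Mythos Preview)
Your proof is correct and follows essentially the same approach as the paper: both reduce the claim to verifying the step lemma (Lemma \ref{lemsound}) for the new rule $[Dec]$, use faithfulness of the premises to extract $i^V R_1 j^V$ and $a^V R_2(j^V) b^V$, invoke the decreasing property to obtain $a^V R_2(i^V) b^V$, and conclude faithfulness of the conclusion without altering the valuation. Your write-up is in fact slightly more explicit than the paper's in noting that only condition (\ref{sound1}) of Definition \ref{deffaithful} is engaged and that $V$ is left unchanged.
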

\begin{proof}
  It suffices to show that Lemma \ref{lemsound} holds for $[Dec]$. We can assume that every d-product frame is decreasing.
  
  Let $@_j @_b \diamondsuit_2 c$ be a formula acquired by applying $[Dec]$ to $\Theta^m$. Let $\Theta^{m+1}$ be a branch obtained by adding $@_j @_b \diamondsuit_2 c$ to $\Theta^m$. Then, there is some $k \in \Nom_1$ such that $@_j \diamondsuit_1 k, @_k @_b \diamondsuit_2 c \in \Theta^m$. Thus, for all $y \in W_2$, we have $\mathfrak{M}, (j^V, y) \models \diamondsuit_1 k$, and $\mathfrak{M}, (k^V, b^V) \models \diamondsuit_2 c$ also holds. They imply $j^V R_1 k^V, b^V R_2(k^V) c^V$. Since we are discussing a decreasing model, it follows that $b^V R_2(j^V) c^V$. Hence, $\mathfrak{M}, (j^V, b^V) \models \diamondsuit_2 c$, and this shows that $@_j @_b \diamondsuit_2 c \in \Theta^{m+1}$ satisfies the condition of Definition \ref{deffaithful} (\ref{sound1}).
\end{proof}

Proving the completeness is a little more complicated. First, we show the following lemma:

\begin{lem}
  Let $\Theta$ be an open, saturated branch of a tableau of $\mathbf{TAB} + Dec$. Then, the d-product model $\mathfrak{M}^\Theta$ of Definition \ref{defmodel} is decreasing.
  \label{leminc}
\end{lem}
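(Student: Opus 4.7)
The plan is to verify the decreasing condition of $\mathfrak{M}^\Theta$ directly from the new $[Dec]$ saturation clause. Fix $x, x' \in W_1^\Theta$ with $x R_1^\Theta x'$ and an arbitrary $(v, w) \in R_2^\Theta(x')$; the task is to show $(v, w) \in R_2^\Theta(x)$. By the definition of $R_h^\Theta$, there are $i_0, j_0 \in \Nom_1$ with $u_\Theta(i_0) = x$, $u_\Theta(j_0) = x'$, and an accessibility formula $@_{i_0} \diamondsuit_1 j_0 \in \Theta$ (placed by the $[\diamondsuit_1]$ rule). By the definition of $R_v^\Theta$, there are $j^*, a^*, b^*$ with $u_\Theta(j^*) = x'$, $u_\Theta(a^*) = v$, $u_\Theta(b^*) = w$, and an accessibility formula $@_{j^*} @_{a^*} \diamondsuit_2 b^* \in \Theta$ (placed by $[\diamondsuit_2^d]$ or by a prior application of $[Dec]$).

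In the easy case $j^* = j_0$, both premises of the $[Dec]$ saturation clause lie in $\Theta$, so the clause immediately yields $@_{i_0} @_{a^*} \diamondsuit_2 b^* \in \Theta$; by the definition of $R_v^\Theta$, this witnesses $((x, v), (x, w)) \in R_v^\Theta(x)$, i.e., $(v, w) \in R_2^\Theta(x)$, and we are done.

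The main obstacle is the case $j^* \neq j_0$, where the two nominals share the urfather $x'$ but are distinct. The $[Id_1]$ saturation clause explicitly excludes accessibility formulae, so $j^*$ cannot simply be renamed to $j_0$ inside $@_{j^*} @_{a^*} \diamondsuit_2 b^*$. The idea is to choose the witnesses for $R_h^\Theta$ more carefully: rather than taking arbitrary $i_0, j_0$, I would try to locate some $i'$ with $u_\Theta(i') = x$ and $@_{i'} \diamondsuit_1 j^* \in \Theta$, which lines up the middle nominal so that $[Dec]$ applies as in the first case. If no such $i'$ exists directly, one has to chase the $\sim_\Theta^1$-equivalence class of $x'$, using the accessibility formulae $@_{\cdot} \diamondsuit_1 \cdot$ supplied by every $[\diamondsuit_1]$ application in the branch, and repeatedly applying $[Dec]$ to propagate the $\diamondsuit_2$-accessibility formula across urfather-equivalent outer nominals until one with urfather $x$ is reached. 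Showing that such a chain always exists inside a saturated branch---and hence that the set of accessibility formulae is effectively closed under urfather equivalence of the middle nominal---is the delicate step of the proof.
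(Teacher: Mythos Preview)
Your proposal has one real gap and one misplaced worry.

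\textbf{The gap.} You assert that the witness $@_{j^*}@_{a^*}\diamondsuit_2 b^*$ for $R_v^\Theta$ is an accessibility formula (``placed by $[\diamondsuit_2^d]$ or by a prior application of $[Dec]$''). That is not guaranteed: the definition of $R_v^\Theta(u_\Theta(i))$ quantifies over \emph{all} formulae $@_i@_a\diamondsuit_2 b\in\Theta$, and such a formula may perfectly well be a quasi-subformula of the root (e.g.\ if $\diamondsuit_2 b$ already occurs inside the root). In that case the $[Dec]$ saturation clause does not apply, since it is restricted to accessibility formulae. The paper's proof handles exactly this by a case split: if the witness is not an accessibility formula, apply the $[\diamondsuit_2^d]$ saturation clause (Definition~6.6(v)) to obtain an accessibility formula $@_j@_a\diamondsuit_2 c$ together with $@_j@_c b\in\Theta$; then $[Red_2]$ gives $@_c b\in\Theta$, so $u_\Theta(c)=u_\Theta(b)$, and now $[Dec]$ applies to the accessibility formula to yield $@_i@_a\diamondsuit_2 c\in\Theta$, hence $u_\Theta(a)\,R_2^\Theta(u_\Theta(i))\,u_\Theta(b)$. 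This reduction to an accessibility witness is the actual content of the argument; your sketch skips it.

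\textbf{The misplaced worry.} The obstacle you flag---that the $R_h^\Theta$-witness $j_0$ and the $R_v^\Theta$-witness $j^*$ might be distinct nominals with the same urfather---is a reasonable thing to notice, but the paper does not treat it as a separate case at all: it simply writes ``Then, we have $@_i\diamondsuit_1 j,\ @_j@_a\diamondsuit_2 b\in\Theta$'' with a single $j$. In other words, the paper takes the witnesses for the two relations to share their middle nominal by fiat. Whether that step is fully justified as written is debatable, but it is the paper's route; your proposed ``chase through the $\sim_\Theta^1$-class via repeated $[Dec]$'' is not what the paper does, and you do not carry it out anyway. If you want to patch this rigorously, the cleaner fix is to argue (using how $@_\cdot\diamondsuit_1\cdot$ and accessibility formulae arise in a saturated branch) that witnesses can always be chosen with matching middle nominal, rather than building an ad hoc chain.
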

\begin{proof}
  Take $u_\Theta(i), u_\Theta(j) \in W_1^\Theta, u_\Theta(a)$ and $u_\Theta(b) \in W_2^\Theta$ such that $u_\Theta(i) R_1^\Theta u_\Theta(j)$ and $u_\Theta(a) R_2^\Theta(u_\Theta(j)) u_\Theta(b)$. Then, we have $@_i \diamondsuit_1 j, @_j @_a \diamondsuit_2 b \in \Theta$. We have to consider two different cases in which $@_j @_a \diamondsuit_2 b$ is or is not an accessibility.
  \begin{enumerate}
    \item Suppose that $@_j @_a \diamondsuit_2 b$ is an accessibility formula. In this case, since $\Theta$ is saturated, we have $@_i @_a \diamondsuit_2 b \in \Theta$ (by the condition corresponding to $[Dec]$). Therefore, $u_\Theta(a) R_2^\Theta(i) u_\Theta(b)$.
    \item Suppose that $@_j @_a \diamondsuit_2 b$ is not an accessibility formula. Since $\Theta$ is saturated, by Definition \ref{defsat} (\ref{satdia2}) there is some $c \in \Nom_2$ such that $@_j @_a \diamondsuit_2 c, @_j @_c b \in \Theta$. Moreover, by Definition \ref{defsat} (\ref{satred2}) we have $@_c b \in \Theta$. Since $@_j @_a \diamondsuit_2 c$ is an accessibility formula, $@_i @_a \diamondsuit_2 c \in \Theta$ holds. Hence, we have $u_\Theta(a) R_2^\Theta(u_\Theta(i)) u_\Theta(c)$. Furthermore, by $@_c b \in \Theta$ and Lemma \ref{lemurpro} (\ref{lemureq}) it follows that $u_\Theta(c) = u_\Theta(b)$. Therefore, we have $u_\Theta(a) R_2^\Theta(i) u_\Theta(b)$.
  \end{enumerate}
\end{proof}

Now we are ready to show the completeness.

\begin{thm}
  The tableau calculus $\mathbf{TAB} + Dec$ is complete for the class of decreasing d-product frames.
\end{thm}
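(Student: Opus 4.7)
The plan is to prove completeness by contraposition, following the same overall architecture used for Theorem \ref{thmcompleted} but exploiting Lemma \ref{leminc} to ensure that the model we construct actually lies in the target class. Assume $\not\vdash \varphi$ in $\mathbf{TAB} + Dec$. Choose nominals $i \in \Nom_1$ and $a \in \Nom_2$ not occurring in $\varphi$. I first need an analogue of Lemma \ref{lemconst} for $\mathbf{TAB} + Dec$: starting from the root $@_i @_a \neg \varphi$, apply all rules of $\mathbf{TAB} + Dec$ (including $[Dec]$) in a fair way, and take the union $\mathcal{T}^*$ of the resulting sequence $\mathcal{T}_n$. By fairness, every open branch of $\mathcal{T}^*$ satisfies the enlarged saturation conditions of Definition \ref{defsatd} plus the extra clause corresponding to $[Dec]$. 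Since $\not\vdash \varphi$, not every branch closes, so we can pick an open saturated branch $\Theta$.

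Next I build the d-product model $\mathfrak{M}^\Theta = (W^\Theta, R_h^\Theta, R_v^\Theta, V^\Theta)$ exactly as in the HdPL completeness proof. By Lemma \ref{leminc}, $\mathfrak{M}^\Theta$ is decreasing, so it belongs to the class we care about. The remaining step is to verify that the model existence theorem (Lemma \ref{lemmodelexistd}) continues to hold for $\mathbf{TAB} + Dec$. This is essentially a re-run of the induction on formula complexity already done in Lemma \ref{lemmodelexistd}: the $[Dec]$ rule only ever produces accessibility formulae of the form $@_i @_a \dia_2 b$, so it does not create any new quasi-subformulae that were not already handled, and it strengthens rather than weakens the saturation conditions used in the cases $\varphi = \dia_2 \psi$ and $\varphi = \neg \dia_2 \psi$. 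Applying this lemma to the root formula gives $\mathfrak{M}^\Theta, (u_\Theta(i), u_\Theta(a)) \models \neg \varphi$, so $\varphi$ is refuted on a decreasing d-product frame and therefore $\not\models \varphi$ over that class.

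The main obstacle I anticipate is the interaction between the $[Dec]$ rule and the notion of \emph{accessibility formula} that restricts $[Id_1]$, $[Id_2]$, and $[\dia_2^d]$. Specifically, one must check that the fair construction of $\mathcal{T}^*$ still terminates each individual rule application correctly: $[Dec]$ produces an accessibility formula $@_i @_a \dia_2 b$ from two accessibility formulae, and this new formula may itself trigger further $[Dec]$ applications when combined with yet another $@_{i'} \dia_1 i$. One has to argue that the fairness schedule still visits every eligible pair eventually, so that the extra saturation clause for $[Dec]$ is genuinely fulfilled in the limit $\mathcal{T}^*$; this is a standard bookkeeping argument, but it is the place where care is needed. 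Everything else—reflexivity/symmetry/transitivity of $\sim_\Theta^k$, well-definedness of urfathers, and the inductive cases of the model existence lemma—transfers verbatim from the HdPL proof, since none of those arguments were sensitive to whether formulae are accessibility formulae or whether the $[Dec]$ rule is present.
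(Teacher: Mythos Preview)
Your proposal is correct and follows essentially the same approach as the paper: the paper's own proof simply observes that $[Dec]$ only ever introduces accessibility formulae (so the model existence lemma for HdPL carries over unchanged), invokes Lemma \ref{leminc} to conclude that $\mathfrak{M}^\Theta$ is decreasing, and then finishes by the standard contraposition argument. Your write-up spells out the fairness and saturation bookkeeping in more detail than the paper does, but the key ideas and the overall structure are identical.
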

\begin{proof}
  Note that only accessibility formulae are added when we use the $[Dec]$ rule. Then, Lemma \ref{lemmodelexist} holds even for a tableau branch of $\mathbf{TAB} + Dec$. Moreover, by Lemma \ref{leminc} $\mathfrak{M}^\Theta$ is decreasing. From these facts, the theorem follows.
\end{proof}

\section{Conclusions and Future Work}

In this paper, we construct tableau calculi for HPL and HdPL and show the soundness and completeness of them. However, there is still room for further work on the tableau calculus of many-dimensional hybrid (dependent) product logic.

\subsection{Termination and Decidability}

As mentioned in Section \ref{secintro}, the tableau calculus for HPL introduced above does not have the termination property. If a tableau calculus had the termination property, then we could show the decidability of HPL. However, unfortunately, the following proposition holds:

\begin{prop}
  There is a root formula with which the tableau for HPL has a branch of infinite length. 
\end{prop}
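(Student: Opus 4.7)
The plan is to exhibit a specific root formula for which the tableau admits an explicitly constructible infinite branch. I would take $\psi := \dia_1 p \land \dia_2 p \land \square_1 \dia_2 p \land \square_2 \dia_1 p$ (where $\square_k$ abbreviates $\neg \dia_k \neg$ and $p \in \Prop$), and use $@_i @_a \neg \neg \psi$ as the root formula. After applying $[\neg \neg]$ and $[\land]$ at the top of the tableau, the four formulae $@_i @_a \dia_1 p$, $@_i @_a \dia_2 p$, $@_i @_a \neg \dia_1 \neg \dia_2 p$, and $@_i @_a \neg \dia_2 \neg \dia_1 p$ all appear in the branch.

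The core step is an inductive construction of two interleaved infinite sequences of pairwise distinct fresh nominals $(j_n)_{n \ge 1} \subseteq \Nom_1$ and $(b_n)_{n \ge 1} \subseteq \Nom_2$, together with accessibility formulae $@_i \dia_1 j_n$ and $@_a \dia_2 b_n$ on the branch. For the base step, $[\dia_1]$ applied to $@_i @_a \dia_1 p$ yields a fresh $j_1$ with $@_i \dia_1 j_1$ and $@_{j_1} @_a p$, and $[\dia_2]$ applied to $@_i @_a \dia_2 p$ yields a fresh $b_1$ with $@_a \dia_2 b_1$ and $@_i @_{b_1} p$. For the inductive step, given $@_i \dia_1 j_n$, I would apply $[\neg \dia_1]$ to $@_i @_a \neg \dia_1 \neg \dia_2 p$ together with $@_i \dia_1 j_n$ (followed by $[\neg \neg]$) to derive the new formula $@_{j_n} @_a \dia_2 p$; applying $[\dia_2]$ to this then introduces a fresh $b_{n+1}$ with $@_a \dia_2 b_{n+1}$. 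Symmetrically, given $@_a \dia_2 b_n$, I would use $[\neg \dia_2]$ on $@_i @_a \neg \dia_2 \neg \dia_1 p$ to derive $@_i @_{b_n} \dia_1 p$, and then $[\dia_1]$ introduces a fresh $j_{n+1}$ with $@_i \dia_1 j_{n+1}$.

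Two verifications would finish the argument. First, the branch stays open throughout: every atomic formula involving $p$ that is derived has the positive form $@_x @_y p$ (there is no mechanism for producing $@_x @_y \neg p$, since $\neg p$ never appears in $\psi$), and no identification of distinct nominals by a formula $@_j k$ or $@_a b$ is ever generated, so none of the three closure conditions of Definition \ref{defclose} can fire. Second, the ``applied only one time per formula'' side conditions on $[\dia_1]$ and $[\dia_2]$ are respected because the triggering formulae $@_{j_n} @_a \dia_2 p$ and $@_i @_{b_n} \dia_1 p$ are pairwise distinct across $n$ (owing to their distinct $@$-prefixes), so each gets fired exactly once. The main technical subtlety I expect is the bookkeeping: one must fix a concrete schedule of rule applications that realises this construction as a legitimate tableau derivation, and check at each stage that the freshness conditions on $[\dia_1]$ and $[\dia_2]$ remain satisfiable.
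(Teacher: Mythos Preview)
Your proposal is correct and follows essentially the same approach as the paper: exploit the interaction of $\square_1 \dia_2$ and $\square_2 \dia_1$ conjuncts so that each fresh nominal introduced in one dimension triggers, via the box rule, a new $\dia$-formula that forces a fresh nominal in the other dimension, yielding an infinite alternating sequence. The paper's witness is $@_{i_0} @_{a_0} (\dia_1 p \land \square_1 \dia_2 q \land \square_2 \dia_1 r)$, which differs from yours only cosmetically (three propositional variables instead of one, and it omits your redundant extra starter $\dia_2 p$); your more careful treatment of the root-formula format and the explicit checks that the branch stays open and that the once-per-formula side conditions are respected go a bit beyond what the paper spells out.
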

\begin{proof}
  Consider the formula $@_i @_a (\dia_1 p \land \square_1 \dia_2 q \land \square_2 \dia_1 r)$, where $p, q, r \in \Prop$. Let us use it as the root formula and construct a tableau. Then, an infinite branch is generated, as shown in Figure \ref{figinftableau} (looping 7--12, where $i_1, i_2, a_1$ are replaced with $i_n, i_{n+1}, a_n$, respectively). Note that we also use $[\square_1], [\square_2]$ mentioned in Remark \ref{remsquare}. 
\end{proof}

\begin{figure}[tb]
\begin{align*}
  1. &@_{i_0} @_{a_0} (\dia_1 p \land \square_1 \dia_2 q \land \square_2 \dia_1 r) \\
  2. &@_{i_0} @_{a_0} \dia_1 p &(1, [\land]) \\
  3. &@_{i_0} @_{a_0} \square_1 \dia_2 q &(1, [\land]) \\
  4. &@_{i_0} @_{a_0} \square_2 \dia_1 r &(1, [\land]) \\
  5. &@_{i_0} \dia_1 {i_1} &(2, [\dia_1]) \\
  6. &@_{i_1} @_{a_0} p &(2, [\dia_1]) \\ 
  7. &@_{i_1} @_{a_0} \dia_2 q &(3, 5, [\square_1]) \\
  8. &@_{a_0} \dia_2 {a_1} &(7, [\dia_2]) \\
  9. &@_{i_1} @_{a_1} q &(7, [\dia_2]) \\
  10. &@_{i_0} @_{a_1} \dia_1 r &(4, 8, [\square_2]) \\
  11. &@_{i_0} \dia_1 {i_2} &(10, [\dia_1]) \\
  12. &@_{i_2} @_{a_1} r &(10, [\dia_1]) \\
  13. &@_{i_2} @_{a_0} \dia_2 q &(3, 11, [\square_1]) \\
  14. &@_{a_0} \dia_2 {a_2} &(13, [\dia_2]) \\
  15. &@_{i_2} @_{a_2} q &(13, [\dia_2]) \\
  16. &@_{i_0} @_{a_2} \dia_1 r &(4, 14, [\square_2]) \\
  17. &@_{i_0} \dia_1 {i_3} &(16, [\dia_1]) \\
  18. &@_{i_3} @_{a_2} r &(16, [\dia_1]) \\
  &\vdots
\end{align*}
\caption{An example of a tableau for HPL that has an infinite branch.}
\label{figinftableau}
\end{figure}

Thus, if we want to construct a tableau calculus with termination, we have to modify the present system, such as fixing some rules or adding some restrictions.

One possible solution is loop-checking. This method constructs finite branches without creating nominals that play the same role, called \emph{twins}. In fact, in the example of Figure \ref{figinftableau}, both $(i_1, a_1)$ and $(i_2, a_2)$ are twins since they make the same propositional variable $q$ true. Thus, proving the termination and completeness of the tableau calculus for HPL with loop-checking would be successful. The mechanism is introduced in some papers; for example, see \cite{bolander2007, bolander2009, hoffmann2010}. \footnote{I thank an anonymous referee for giving me some helpful advice on this subsection.}

The problem of whether the tableau calculus for HdPL has termination is still an open problem. Note that with a formula $@_i @_a (\dia_1 p \land \square_1 \dia_2 q \land \square_2 \dia_1 r)$ we can construct a finite tableau for HdPL (we leave it to the reader).

\subsection{What if We Have More Dimensions?}

The tableau calculus we have can be easily extended to $n$-dimensional HPL ($n \geq 3$). Soundness and completeness can be proved with a method similar to that of Sections \ref{secsound} and \ref{seccomplete}. 

However, we cannot construct any tableau calculus with termination, since it is known that every product modal logic with more than 3 dimensions is undecidable (see \cite{hirsch2002, gabbay2003}).

\subsection{Adding More Operators}

The system we have considered so far is a system containing only the most basic operator, @. The other operators in hybrid logic are the existential operator $E$ and the downarrow operator $\downarrow$. In orthodox (i.e., one-dimensional) hybrid logic, $E \varphi$ can be interpreted as ``there exists a possible world where $\varphi$ is true'' and $\downarrow x.\varphi$ can be interpreted as ``when the current world is described as $x$, $\varphi$ is true.'' See \cite{blackburn2006P} for more details.

Let $E_1$ and $E_2$ be existential operators and $\downarrow_1 x$ and $\downarrow_2 y$ be downarrow operators for the first and the second dimension, respectively. The rules for these operators will be as follows (we show only the rules for $E_1$ and $\downarrow_1 x$; the other rules can be defined similarly):

\begin{gather*}
    \infer[{[E_1]}^{*1}]
      {@_j @_a \varphi}
      {@_i @_a E_1 \varphi}
    \qquad
    \infer[{[\neg E_1]}^{*2}]
      {@_j @_a \neg \varphi}
      {@_i @_a \neg E_1 \varphi}
    \\
    \infer[{[\downarrow_1]}^{*3}]
      {@_i @_a \varphi[i/x]}
      {@_i @_a \downarrow_1 x. \varphi}
    \qquad
    \infer[{[\neg \downarrow_1]}^{*3}]
      {@_i @_b \neg \varphi[i/x]}
      {@_i @_a \neg \downarrow_1 x. \varphi} \\
  \end{gather*}

  *1: $j \in \Nom_1$ does not occur in the previous part of this branch.

  *2: $j \in \Nom_1$ has to occur in this branch.
  
  *3: $\varphi[i/x]$ is a formula obtained by replacing all the occurrences of $x$ in $\varphi$ with a nominal $i$.

\vspace{\baselineskip}
It is clear that any tableau calculus for HPL (or HdPL) with $[\downarrow_1]$ and $[\downarrow_2]$ does not have the termination property, since it is shown in \cite{areces1999} that general hybrid logic containing $\downarrow$ is undecidable. Moreover, if we add both $[E_1]$ and $[E_2]$ to the tableau calculus, then we lose the termination property \cite[Theorem 5.37]{gabbay2003}. We do not know whether this is also the case for the tableau calculus for HPL with only $[E_1]$.

\subsection{Adding More Rules}

Adding axioms to the hybrid tableau calculus makes them richer. With the proper addition of rules, we can construct a tableau calculus for \textbf{S4}.

In a previous work \cite{bolander2009}, some axioms corresponding to reflexivity, irreflexivity, and transitivity were added to the tableau calculus for hybrid logic with preserving termination. Applying this method, we also acquire a many-dimensional tableau calculus corresponding to \textbf{S4}$^n$. In terms of automatic calculation, however, it will be better to add axioms in the form of an inference rule.

Moreover, for the HdPL tableau calculus, some rules corresponding to dependencies can be added as in Section \ref{subsecinc}. These should also be added in the form of inference rules. 

\section*{Acknowledgments}
This paper is based on two presentations I gave at conferences in Japan. The first one, related to Sections 2--5, was presented at a conference in RIMS held in December 2021 \cite{nishimura2022}. The other one, related to Section \ref{secHdPL}, was presented at MLG2021 in March 2022.

I would like to thank Ryo Kashima and Katsuhiko Sano for their guidance in writing this paper. In particular, Katsuhiko Sano taught me how to write Figures \ref{figframe} and \ref{figdframe}. Tatsuya Abe and Ken Shiotani provided comments on my presentation at the seminar. This work was supported by JST SPRING, Grant Number JPMJSP2106.

\bibliographystyle{plain}
\bibliography{logic}

\end{document}